\documentclass[12pt]{amsart}

\usepackage{fullpage}
\usepackage{xcolor}
\usepackage{graphicx}
\usepackage{amssymb, amsmath, amsthm}
\usepackage{comment}

\usepackage{thmtools, thm-restate}

\usepackage{hyperref}

\newcommand{\McC}{$^{\text{c}}$}

\newcommand{\uhp}{\mathbb{H}}
\newcommand{\R}{\mathbb{R}}
\newcommand{\C}{\mathbb{C}}
\newcommand{\B}{\mathbb{B}}
\newcommand{\D}{\mathbb{D}}

\newcommand{\sgn}{\text{sgn}}

\newtheorem{theorem}{Theorem}[section]
\newtheorem{prop}[theorem]{Proposition}
\newtheorem{lemma}[theorem]{Lemma}
\newtheorem{corollary}[theorem]{Corollary}

\theoremstyle{remark}
\newtheorem{example}[theorem]{Example}
\newtheorem{remark}[theorem]{Remark}
\newtheorem{question}[theorem]{Question}

\title{Stable polynomials and bounded rational functions in the unit ball}
\author{Greg Knese}
\address{Department of Mathematics, Washington University in St. Louis, St. Louis, MO 63130, USA}
\email{geknese@wustl.edu}

\author{James Eldred Pascoe}
\address{Department of Mathematics, Drexel University, Philadelphia, PA 19104, USA}
\email{jep362@drexel.edu}

\author{Alan Sola}
\address{Department of Mathematics, Stockholm University, 106 91 Stockholm, Sweden}
\email{sola@math.su.se}
\date{\today}                                           
\subjclass[2020]{Primary 14H45, 32A08, 32B05}
\keywords{Stable polynomials, Puiseux series, bounded rational functions, unit ball}
\thanks{GK partially supported by NSF grant DMS-2247702}

\begin{document}

\begin{abstract}
We study polynomials with no zeros on the
unit ball in complex Euclidean space 
with a view toward characterizing
when a rational function is bounded on
the ball.  
We give a complete local description
of such polynomials in two variables near a boundary zero.
In higher dimensions, we give a partial characterization 
of a simple boundary zero.
Several applications are given including boundedness of rational functions
with boundary singularities and
constructions of examples with prescribed local properties.
\end{abstract}

\maketitle


\section{Introduction}

Stable polynomials, those with no zeros in some domain $\Omega \subset \C^d$,
are fundamental
in complex analysis---most visibly as the denominators of
rational functions analytic on $\Omega$.  
While such polynomials have been studied a great deal when $\Omega$
is a standard product domain such as the unit polydisk $\D^d$
or the product of upper half planes, they have been much
less studied in the setting of one of the most common
model domains in several complex variables, the Euclidean unit
ball in $\C^d$, 
\[
\B_d = \left\{z=(z_1,\dots,z_d) \in \C^d: |z|^2 = \sum_{j=1}^{d}|z_j|^2<1\right\}.
\]
The reasons for this discrepancy are varied but perhaps
rely on the facts that (1) various applications necessarily lead
to product domains due to some sort of independence of the variables
involved and (2) product domains have a variety of useful symmetries
(for example, reflection across the unit circle)
that make them more tractable to study.  
The ball of course lacks product structure and the reflective symmetries
of the polydisk but still possesses a transitive automorphism group \cite[Chapter 2]{Rud}.
Polynomials with no zeros in $\B_d$ have certainly appeared in the literature
especially in the context of function spaces on the ball and in particular the
study of cyclicity of polynomials (with respect to shift operators), see eg. \cite{S,KV,APRS}.
They also appear in the study of proper rational maps between balls (see \cite{dA2})
as well as in the study of interpolation problems for bounded analytic functions
on the ball (see \cite{KZ}).

The purpose of this article is to initiate the study of polynomials with no zeros 
on the ball for its own sake while also keeping in mind applications to 
understanding analytic functions on the ball.  In particular, a basic 
question, called the admissible numerator problem, is to characterize bounded rational functions $f= q/p$ on the ball
when the denominator $p$ has a boundary zero which without loss of generality 
can be assumed to be at $(0,\ldots,0, 1)$\footnote{A unitary transformation can take
any boundary point to this one.}. 
In other words, we seek to describe the ideal 
\[
\mathcal{I}_p^{\infty}=\{q\in \mathbb{C}\{z_1,\ldots, z_d\}\colon q/p \, \, \textrm{is bounded on \,\,} \mathbb{B}_d \cap B((0,\ldots,0, 1), \epsilon) \, \textrm{for some}\, \epsilon>0\}.
\]
Here $\C\{z_1,\dots,z_d\}$ is the ring of convergent power series in the
given variables centered at $0$.

Since boundary zeros of $q$ and $p$ can compete without canceling, this is a subtle question: it is generally not enough for $q$ to vanish at boundary zeros of $p$. 

\begin{example}
Consider the polynomial $p(z_1,z_2)=1-z_2$ which has a single zero in $\overline{\mathbb{B}_2}$ at the boundary point $(0,1)$. For this simple example, we can determine the generators of the ideal $\mathcal{I}^{\infty}_{p}$ by elementary means. First note that
\[|z_1|^2\leq 1-|z_2|^2=(1+|z_2|)(1-|z_2|)\leq 2(1-|z_2|), \quad (z_1, z_2)\in \mathbb{B}_2.\] 
Hence, the rational function $f(z)=z_1^2/(1-z_2)$ is bounded in $\mathbb{B}_2$. Trivially, $1-z_2 \in \mathcal{I}^{\infty}_p$ and hence $(z_1^2,1-z_2)\subset \mathcal{I}^{\infty}_p$. To prove that $(z_1^2, 1-z_2)=\mathcal{I}^{\infty}_p$ it suffices to show that $1,z_1\notin \mathcal{I}^{\infty}_p$.  

It is clear that $1/(1-z_2)$ is not bounded in the ball. Let us show that $f(z)=z_1/(1-z_2)$ is unbounded also. 
First, note that for fixed $0<a<1$,
\[r^2+a^2(1-r^2)< r^2+1-r^2=1, \quad r\in [0,1),\]
and hence $\gamma=\{(a(1-r^2)^{1/2},r)\colon r\in [0,1)\}$
is a curve in $\mathbb{B}_2$ terminating at $(0,1)$. Now
\[\lim_{r\to 1}|f(a(1-r^2)^{1/2}, r)|^2=\lim_{r\to 1}\frac{a^2(1-r^2)}{(1-r)^2}
=\lim_{r\to 1}\frac{a^2(1+r)}{1-r}=\infty. \quad \diamondsuit
\]
\end{example}

The analogous problem of characterizing admissible numerators for the bidisk $\D^2$ was answered in \cite{BKPS},\cite{Kollar} (see also \cite{Kprep}) and the ``generic'' situation was addressed in the paper \cite{BKPS2} for 
the polydisk $\D^d$.  
In these cases, and in the present paper, it was necessary to understand
local properties of the zero set of a polynomial without zeros
on the domain near a boundary zero.
In two variables and in the setting of the bidisk, a complete
description is possible; see \cite{BKPS}, pg 98, ``Theorem (Puiseux Factorizations)''.
For higher dimensional polydisks,
there is still much to be discovered but certain special
cases such as when $p$ has an isolated boundary zero 
at which the zero set of $p$ is smooth can be addressed; 
see \cite{BKPS2}, Theorem 1.10.

When studying the local properties of the 
zero set of a ball-stable polynomial
at a boundary zero 
it is convenient to convert from 
the unit ball $\mathbb{B}_d$ to the biholomorphically equivalent
 Siegel half-space
\[
U_d = \left\{(z_1,\dots,z_{d-1},w)\in \C^d: \Im w > \sum_{j=1}^{d-1} |z_j|^2\right\}
\]
via a rational biholomorphic map.
The most basic description and situation occurs when $p$
has a simple zero at $0$.

\begin{restatable}{theorem}{smooththm}\label{smooththm}
Let $p \in \C[z_1,\dots, z_{d-1},w]$ be non-vanishing
in $U_d$ near $0$, $p(0) = 0$, and $\nabla p(0) \ne 0$.
Let $Hp(0) = \left(\frac{\partial^2 p}{\partial z_j z_k}(0)\right)_{j,k=1,\dots,d-1}$ be the Hessian matrix of $p$ with respect to $z$ at $0$.
Then, $\nabla p(0)$ is a multiple of $\vec{e}_d$ and
\[
\frac{1}{2|\nabla p(0)|} Hp(0)
\]
is contractive.

For a partial converse, if $p$ is a polynomial
such that $p(0)=0$, $\nabla p(0)$ is a multiple of $\vec{e}_d$,
and $\frac{1}{2|\nabla p(0)|} Hp(0)$ is strictly contractive,
then $p$ is non-vanishing near $0$ within $U_d$.
Furthermore, in this case
for any function $q$ analytic near $0$, we have that 
$q/p$ is bounded near $0$ within $U_d$  
if and only if $q \in (w, (z)^2)$.
\end{restatable}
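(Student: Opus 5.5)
Near $0$ I will use the implicit function theorem. The plan has three parts: (i) show the gradient is a multiple of $\vec e_d$; (ii) reduce the zero set to a graph $w=\phi(z)$ and read off the Hessian condition; (iii) prove the converse and the boundedness criterion by estimating $|p|$ from below on $U_d$.

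\textbf{Step 1: gradient.} Write $\nabla p(0)=(a_1,\dots,a_{d-1},b)$. Suppose some $a_j\ne 0$. Then the implicit function theorem solves $p=0$ as $z_j=g(z',w)$ near $0$, with $g(0)=0$. Take $w=it$ with $t>0$ small and all other $z_k=0$. Then $z_j=g(0,it)=O(t)$, so $|z_j|^2=O(t^2)<t=\Im w$. This gives zeros in $U_d$ arbitrarily close to $0$, a contradiction. Hence $a=0$, $b\ne 0$, and $\nabla p(0)=b\vec e_d$.

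\textbf{Step 2: Hessian necessity.} Solve $p=0$ as $w=\phi(z)$ with $\phi$ analytic and $\phi(0)=0$. Since $\nabla_z p(0)=0$, also $\nabla\phi(0)=0$. Differentiating $p(z,\phi(z))=0$ twice at $0$ gives
\[
\phi(z)=-\frac{1}{2b}z^tHp(0)z+O(|z|^3).
\]
Non-vanishing in $U_d$ means $\Im\phi(z)\le |z|^2$ near $0$. Replace $z$ by $e^{i\theta}rz$ with $|z|=1$ and let $r\to0$. This yields
\[
\Im\Bigl(-e^{2i\theta}\tfrac{1}{2b}z^tHz\Bigr)\le 1
\]
for all $\theta$, hence $|z^tHz|\le 2|b|$ for all unit vectors $z$. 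Since $H$ is symmetric, Takagi factorization gives $\|H\|=\sup_{|z|=1}|z^tHz|$. So $\frac{1}{2|b|}H$ is contractive.

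\textbf{Step 3: converse and boundedness.} By Weierstrass preparation, $p=u\,(w-\phi(z))$ with $u(0)\ne0$. If $\frac{1}{2|b|}\|H\|=c<1$, then near $0$
\[
\Im\phi(z)\le c|z|^2+O(|z|^3)<|z|^2 \quad (z\ne0),
\]
so there are no zeros in $U_d$.

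For boundedness I will prove the key lower bound
\[
|w-\phi(z)|\gtrsim |w|+|z|^2 \quad\text{on } U_d \text{ near } 0.
\]
If $|w|\ge 2|\phi(z)|$ this is clear, since $|\phi(z)|\lesssim|z|^2$ and then $|w-\phi|\ge|w|/2$. Otherwise $|w|\lesssim|z|^2$, and
\[
|w-\phi|\ge \Im w-\Im\phi\ge |z|^2-c|z|^2-O(|z|^3)\gtrsim |z|^2.
\]
Given the bound, any $q\in(w,(z)^2)$ satisfies $|q|\lesssim|w|+|z|^2$, so $q/p$ is bounded.

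Conversely, write $q=q_0+\sum_j \ell_jz_j+(\text{terms in } (w,(z)^2))$. If $q_0\ne0$, then $q/p$ blows up along $w=it$, $z=0$. If some $\ell_j\ne0$, pick a direction $v$ with $\sum\ell_jv_j\ne0$, and set $z=sv$, $w=2i|s|^2|v|^2$. This point lies in $U_d$, and there $|p|\lesssim s^2$ while $|q|\gtrsim |s|$, which is unbounded.

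I expect the main obstacle to be the Takagi/norm identity together with the uniform error control in the lower bound. The rest is routine.
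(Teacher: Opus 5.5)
Your proposal is correct and follows essentially the same route as the paper: the implicit-function-theorem argument of Lemma \ref{psmooth} for the gradient, and the rotation plus Autonne--Takagi argument of Lemma \ref{Autonne} for contractivity. It also uses the factorization $p=u\,(w-\phi(z))$ with the two-sided estimate \eqref{dvarestimate}, and a test curve in $U_d$ to rule out constant and linear terms in $q$. The remaining differences are cosmetic: you use Taylor expansion instead of Weierstrass division by $w$, and an upper bound on $|p|$ along the curve instead of $|p|\asymp|w|$; your case split in the lower bound is unnecessary, and its first case actually rests on $|z|^2<\Im w\le |w|$ in $U_d$ rather than on $|\phi(z)|\lesssim|z|^2$.
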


Here $(z)^2$ is the ideal generated by $z^{\alpha}$ with $|\alpha| = 2$
and $(w,(z)^2)$ is the ideal generated by $w$ and $(z)^2$.
The ambient ring can be taken to be the ring of convergent power
series centered at $0$, denoted $\C\{z_1,\dots, z_{d-1},w\}$.
The case where $\frac{1}{2|\nabla p(0)|} Hp(0)$ is strictly
contractive ends up implying especially simple behavior
as $p(z_1,\dots,z_{d-1},w)$ is comparable above and below to $w$
within $U_d$.
The case where $\frac{1}{2|\nabla p(0)|} Hp(0)$ has operator
norm $1$ is already subtle in the two variable case.

In two variables, we can move beyond a simple boundary zero
and beyond the strict contractivity condition above. By the Newton-Puiseux theorem, 
an algebraic curve that avoids $U_2$ will have branches parametrized locally by
\(
t\mapsto (t^M, \phi(t)) 
\)
where $\phi$ is analytic and vanishing at $0$ and satisfies
\[
 |t|^{2M}\geq \Im(\phi(t)) 
\]
for $t\in \C$ sufficiently close to $0$.
We can perform a rotation of our parameter and assume the parametrization is of the form
\(
t \mapsto (c t^M, \phi(t))
\)
where $|c|=1$ and the first non-zero coefficient of $\phi$ lies on the positive imaginary axis.
This is just a normalization that simplifies some formulas later. 

Now let us write
\[
\phi(t) = \sum_{j=1}^{\infty} \phi_j t^j = \psi_0 t^{a_0} + o(t^{a_0})
\]
where the first sum is the standard power series expansion for $\phi$
while the second expression records the first non-zero term that appears
(here $\psi_0 \in i(0,\infty)$ and $a_0\in \mathbb{N}$).

Then, our requirement on $\phi$ is
\begin{equation} \label{requirement}
 G_{\phi}(t) := |t|^{2M} - \Im (\phi(t) )\geq 0.
\end{equation}

Our goals are to:
\begin{itemize}
    \item describe all such $\phi$ 
    \item see how properties of $\phi$
can be applied to study admissible numerators of polynomials non-vanishing
in $U_2$ near $(0,0)$
    \item construct polynomials with prescribed local and/or global behavior.
\end{itemize}

Just as in Theorem \ref{smooththm}, we have the following
result.

\begin{restatable}{prop}{basictwovar} \label{basictwovar}
Let $\phi(t) = \psi_0 t^{a_0} + o(t^{a_0}) \in \C\{t\}$ as above with $\psi_0 \in i(0,\infty)$.
If \eqref{requirement} holds for $|t|$ small, then $a_0 \geq 2M$.
If in addition $a_0=2M$, then $|\psi_0| \leq 1$.

Conversely, if $a_0 > 2M$ or $a_0=2M$ and $|\psi_0| < 1$, then \eqref{requirement}
holds for $|t|$ small and we have $w - \phi(z^{1/M})$ comparable
to $w$.
\end{restatable}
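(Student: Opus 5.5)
The plan is to work directly with the leading term of $\phi$ and test \eqref{requirement} along rays $t = re^{i\theta}$. Write $\psi_0 = i b$ with $b = |\psi_0| > 0$. Then
\[
\Im \phi(re^{i\theta}) = b r^{a_0}\cos(a_0\theta) + O(r^{a_0+1}),
\]
uniformly in $\theta$.

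For the necessity of $a_0 \ge 2M$, choose $\theta$ with $\cos(a_0\theta) = 1$ (for instance $\theta = 0$). Then $G_\phi(r) = r^{2M} - b r^{a_0} + O(r^{a_0+1})$. If $a_0 < 2M$, the term $-br^{a_0}$ dominates for small $r$ and makes $G_\phi$ negative, which contradicts \eqref{requirement}. If $a_0 = 2M$, the same ray gives $G_\phi(r) = (1-b)r^{2M} + O(r^{2M+1})$, and nonnegativity forces $b \le 1$, that is, $|\psi_0| \le 1$. This direction is just a leading-order comparison.

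For the converse, the uniformity of the error term in $\theta$ is what we need. If $a_0 > 2M$, then $|\Im\phi(t)| \le C|t|^{2M+1} \le \tfrac12 |t|^{2M}$ for $|t|$ small, so $G_\phi \ge \tfrac12 |t|^{2M}$. If $a_0 = 2M$ and $b<1$, then $|\Im \phi(t)| \le b|t|^{2M} + C|t|^{2M+1}$, so $G_\phi(t) \ge (1-b-C|t|)|t|^{2M} \ge \tfrac{1-b}{2}|t|^{2M}$ near $0$. In both cases we get the stronger quantitative bound
\[
\Im\phi(t) \le (1-\delta)|t|^{2M}
\]
for some $\delta>0$ and $|t|$ small. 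We also have $|\phi(t)| \le C'|t|^{2M}$, since $a_0 \ge 2M$.

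For the comparability statement, take $(z,w) \in U_2$ near $0$ and let $t$ be any $M$-th root of $z$, so $|t|^{2M} = |z|^2 < \Im w$. We want $|w - \phi(t)| \asymp |w|$. The upper bound is immediate: $|w-\phi(t)| \le |w| + C'|z|^2 \le (1+C')|w|$. The lower bound is the step that needs care. It is tempting to use the imaginary part,
\[
\Im(w - \phi(t)) \ge \Im w - (1-\delta)|z|^2 \ge \delta\,\Im w,
\]
but this alone does not control $|w|$, because $\Re w$ can be much larger than $\Im w$. I therefore split into cases. If $|\Re w| \le K\,\Im w$ for a large fixed $K$, the imaginary-part bound gives $|w-\phi(t)| \ge \delta\,\Im w \ge \tfrac{\delta}{1+K}|w|$. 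Otherwise $|w| \le 2|\Re w|$ and $\Im w \le |\Re w|/K$, so $|\phi(t)| \le C'|z|^2 \le C'\Im w \le (C'/K)|\Re w|$. Choosing $K = 2C'$ gives $|\Re(w-\phi(t))| \ge |\Re w|/2 \ge |w|/4$. Combining the two cases gives comparability, with constants uniform over the choice of branch of $z^{1/M}$.
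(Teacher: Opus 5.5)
Your proof is correct and follows essentially the same route as the paper. For necessity you use a leading-order comparison along a ray; your choice $\theta=0$ is the right one, whereas the paper's stated angle $a_0\theta=3\pi/2$ is a slip, since the $r^{a_0}$ coefficient of $G_\phi$ vanishes there. For the converse you use the uniform bound $G_\phi(t)\ge\delta|t|^{2M}$ together with $|\phi(t)|\lesssim|t|^{2M}$. The only difference is the last step: the paper avoids your case split on $\Re w$ by noting $|w-\phi(t)|\ge \Im w-\Im\phi(t)\ge G_\phi(t)\ge\delta|z|^2$, hence $|w|\le|w-\phi(t)|+C'|z|^2\lesssim|w-\phi(t)|$ directly.
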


Thus, again we are led to simple behavior in one case, $|\psi_0|<1$,
and more possibilities in the borderline case of $\psi_0 = i$ and $a_0=2M$.
In the case $|\psi_0|<1$, the expression $w-\phi(z^{1/M})$ has an isolated zero
on $\partial U_2$ 
while $\psi_0 = i$ opens up other possibilities
including that $w - \phi(z^{1/M})$ could have a
real 1-dimensional curve of zeros on $\partial U_2$.
Curiously, the only previously known examples in the literature 
of irreducible polynomials
with no zeros in $U_2$ and with a curve of zeros on $\partial U_2$ 
were $w-z^2$ and closely related modifications.
In the ball setting this corresponds to $1-z_1^2-z_2^2$ 
or $1-2z_1z_2$ (and the modifications would correspond
to unitary change of variables).
With the present paper we can add the following example:
\begin{equation}
w+w^2 -z^2
\label{newexample}
\end{equation}
which again has no zeros in $U_2$ 
but a curve of zeros on the boundary. A version of this example in the setting of $\mathbb{B}_2$ is
\[i-1+2z_2-(1+i)z_2^2+z_1^2.\]
The boundary curve in $\partial U_2$ is parametrized by
\[
\theta \mapsto \left(\pm \frac{\sqrt{2}}{2} e^{i\theta/2} e^{i\pi/4} \sqrt{\sin \theta}, -\frac{1}{2} + \frac{1}{2} e^{i\theta}\right), \quad  \theta\in [0,\pi].
\]  
The first component plots the lemniscate of Bernoulli and the 
second component is a half-circle.  See Figure \ref{fig:Bernoulli}
and Example \ref{ex:exotic}.

\begin{figure} \label{fig:Bernoulli}
\includegraphics[scale = 0.6]{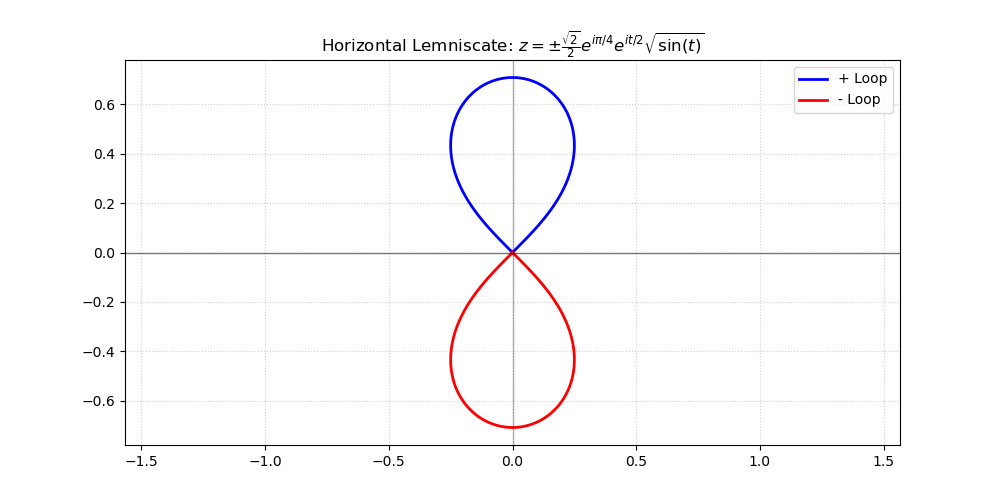}
\caption{Bernoulli Lemniscate}
\end{figure}

What other examples are there?

\begin{question}
Is it possible to characterize irreducible $p \in \C[z_1,z_2]$ 
with no zeros in $\B_2$ or $U_2$ and infinitely many boundary zeros?
\end{question}

Our methods answer a local analytic version of this question.
The easiest way to describe a local analytic example is to give a parametrization
of its zero set.
The analytic curve
\begin{equation}
s\mapsto (s e^{is}, is^2 - (2/3) s^3)
\label{M1example}
\end{equation}
defined for small $s\in \C$
avoids $U_2$ and maps real numbers to $\partial U_2$.
To get an analytic function with the desired
local properties we let $\Phi(z)$ be
the inverse function of $s\mapsto z=se^{is}$
and then $w - i\Phi(z)^2 - (2/3)\Phi(z)^3$
has no zeros in $U_2$ near $(0,0)$
yet has a curve of zeros on $\partial U_2$.
See Example \ref{excurve-u-new} for details.
In two variables, 
polynomials with a curve of zeros on the boundary
are not relevant to the study of
bounded rational functions on 
the ball because such polynomials
do not occur as denominators of bounded
$q/p$ written with $q$ and $p$ having
no common factors.\footnote{Specifically,
in order for $q/p$ to be bounded 
it is necessary that $Z_p\cap \partial \mathbb{B}_2 \subset Z_q\cap \partial \mathbb{B}_2$
and infinitely many common zeros of $p$ and $q$ imply
a common factor by B\'ezout's theorem.}
The phenomenon is interesting nonetheless.

The next theorem gives a complete characterization
of the relevant $\phi$ of the form $\phi(t) = it^{2M} + o(t^{2M})$.
\footnote{It is remarkable that some of the structure present
mirrors what happens in the setting of the bidisk or bi-upper half-plane.
See \cite{BKPS}, pg 98, ``Theorem (Puiseux Factorizations).''}
Taken together, Proposition \ref{basictwovar} and Theorem \ref{mainthm}
give a complete local description of polynomials
with no zeros in $U_2$ near $(0,0)$.

\begin{restatable}{theorem}{mainthm}\label{mainthm}
Given $M\in \mathbb{N}$, $|c|=1$
and $\phi \in \C\{t\}$, 
$\phi(t) = it^{2M} + o(t^{2M})$ such that
\[
t \mapsto (ct^M, \phi(t))
\]
is an injective parametrization
of an analytic curve that avoids $U_2$ for $t\in \C$ small, there exists $L(s) \in \C\{s\}$ with $L(0)=0$
such that our curve can be parametrized by
\begin{equation}\label{Lparam}
s\mapsto \left(cs^M e^{M L(s)}, is^{2M} 
+ 2Mi\int_{[0,s]} w^{2M} L'(w) dw\right).
\end{equation}
Furthermore, the curve falls into
two distinct types:
\begin{enumerate}
    \item (curve type) $M=1$ and $L(s) \in i\R\{s\}$
    in which case our curve intersects
    $\partial U_2$ along a real one dimensional curve.
    \item (isolated type) $M\geq 1$ and 
    $L(s)$ has the form
\[
L(s) = iA_0(s^M) + s^{2MK}L_1(s)
\]
where $K\in \mathbb{N}$, 
$A_0(s) \in \R[s]$ has degree less than $2K$,
and $L_1 \in \C\{s\}$, $\Re(L_1(0))> 0$.
    In this case, our curve intersects
    $\partial U_2$ at the isolated point $(0,0)$.
    \end{enumerate}
Conversely, given $M$ and $L(s)$ 
satisfying either of the two conditions above,
the parametrization \eqref{Lparam}
yields an analytic curve
that avoids $U_2$ near $(0,0)$.    
For the parametrization to be injective 
we need $L$ to not be a function of $s^M$.
\end{restatable}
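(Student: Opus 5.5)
The first step is to produce the parametrization \eqref{Lparam}. The key identity is that a curve $s\mapsto(z(s),w(s))$ has the form \eqref{Lparam} exactly when
\[
z(s)=cs^Me^{ML(s)}\quad\text{and}\quad w'(s)=2i\,s^{2M}\,\frac{z'(s)}{z(s)}.
\]
This holds because $z'/z=M(1/s+L')$, so $2is^{2M}z'/z=2Mis^{2M-1}+2Mis^{2M}L'(s)$, which is the derivative of the second coordinate in \eqref{Lparam}.

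So, starting from $(ct^M,\phi(t))$, I would define $s$ by the equation
\[
s^{2M}=\frac{\phi'(t)\,ct^M}{2i\,(ct^M)'}=\frac{t\phi'(t)}{2Mi}.
\]
Since $\phi(t)=it^{2M}+o(t^{2M})$, the right-hand side equals $t^{2M}(1+O(t))$. Hence there is an analytic root $s=t(1+O(t))$, which is a local biholomorphism. I then set $e^{L(s)}=t/s$ with $L(0)=0$. After this change of variables, $w(s)=\phi(t(s))$ satisfies $w'=2is^{2M}z'/z$ and $w(0)=0$, which gives \eqref{Lparam}.

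For injectivity: if $L$ were a function of $s^M$, then $s\mapsto\zeta s$ with $\zeta^M=1$ would fix both coordinates. Conversely, the original parametrization is injective and corresponds to $t$, so $L$ cannot depend only on $s^M$.

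The second step is to study
\[
G(s)=|s|^{2M}e^{2M\Re L(s)}-\Im w(s)\ge 0.
\]
Its leading part is $|s|^{2M}-\Re(s^{2M})$, which is nonnegative and vanishes only on the $2M$ rays where $s^{2M}>0$. So everything is decided near these rays. I would compute $\partial_r G$ in polar coordinates $s=re^{i\theta}$ using the derivative identity:
\[
\partial_r\Im w=\Im\!\big(e^{i\theta}w'(s)\big)=\Re\!\Big(2M\frac{s^{2M}}{r}\big(1+sL'(s)\big)\Big),
\]
which is to be compared with
\[
\partial_r|z|^2=\frac{2M}{r}|z|^2\big(1+\Re(sL'(s))\big).
\]
On a critical ray, where $s^{2M}=r^{2M}$, the two derivatives differ by a factor of the form $e^{2M\Re L}$ versus $1$. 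Integrating in $r$ then gives $G$ on the ray to leading order.

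Next I would show that a term $i a s^{Mj}$ with $a\in\R$ is real-valued times $i$ on the critical rays, where $s^M=\pm r^M$. Such terms contribute nothing to $\Re L$ there, and their effect on $G$ cancels to leading order. I expect to verify this by noting that $L\mapsto L+iA(s^M)$ amounts to composing the curve with a boundary-preserving symmetry to the relevant order (the rotation $z\mapsto e^{i\alpha}z$ in the simplest case).

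Then I would peel off the maximal polynomial $iA_0(s^M)$ and look at the first remaining term $\lambda s^{m}$ of $L$. The case analysis is as follows.
\begin{itemize}
\item If $m$ is not of the form $2MK$, or if $\Re\lambda\le 0$, I would exhibit a direction slightly off a critical ray along which $G<0$, using the quadratic-in-angle lower bound $|s|^{2M}-\Re s^{2M}\approx 2M^2 r^{2M}\delta^2$ for angular deviation $\delta$. Balancing $\delta$ against the perturbation should force $m$ to be even relative to $2M$ and force $\Re\lambda>0$.
\item The degree bound $\deg A_0<2K$ comes from the same balancing. Imaginary terms of order at least $2MK$ in $s$ are absorbed into $L_1$.
\item If no such term exists, then $L\in i\R\{s^M\}$. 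In that case $G$ vanishes identically on the critical rays. Here I need $M=1$, since otherwise injectivity fails, and this gives the curve type.
\end{itemize}

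For the converse direction, the same expansion shows $G\ge0$ near each ray when $\Re L_1(0)>0$. Away from the rays $G\gtrsim|s|^{2M}$ holds trivially. In the curve type, a direct computation shows that $G$ restricted to the critical rays vanishes, and I would still need to check $G\ge0$ off them.

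I expect the main obstacle to be the sharp local analysis near the critical rays. This means showing that imaginary polynomial terms in $s^M$ contribute exactly nothing, and that the first non-such term must sit at order exactly $2MK$ with $\Re>0$ and with $\deg A_0<2K$. This needs a careful two-scale expansion of $G$ in $r$ and the angular deviation, and I would first test it on $M=1$ with $L$ a polynomial before writing the general case.
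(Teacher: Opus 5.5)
Your first step matches the paper's. The substitution $s^{2M}=t\phi'(t)/(2Mi)$ is exactly $s=\Phi(t)$, $t=\Psi(s)=se^{L(s)}$. Your derivation of \eqref{Lparam} is correct, and so is the injectivity remark, which is more direct than the paper's Lagrange-inversion argument. The on-ray computation, integrating $\partial_rG=2Mr^{2M-1}(e^{2M\Re L}-1)(1+\Re(sL'))$ along $s=r\mu^k$ with $\mu=e^{i\pi/M}$, is also correct. In every bad case of your list the on-ray values already become negative for some $k$, by a root-of-unity argument, so the necessity direction needs no off-ray direction.

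The genuine gap is the converse, together with the claim that in the isolated type the curve meets $\partial U_2$ only at the origin. Nothing in the proposal shows that nonnegativity (or positivity) of $G$ on the rays $s^{2M}>0$ extends to a full neighborhood. The two-scale expansion you sketch is not carried out, and it is not automatic, because the circles $|s|=r$ are not the circles $|t|=\rho$. On a ray one finds $\partial_\theta G(re^{i\theta})=-2Mr^{2M}\Im(sL'(s))\bigl(e^{2M\Re L(s)}-1\bigr)$, which is in general nonzero. So a term linear in the angular deviation, and uniform remainders, must be controlled against $r^{2M}\delta^2$ and the on-ray value. In the curve type there is no positive on-ray value to absorb errors, and you explicitly leave that case open. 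Also, $L\mapsto L+iA(s^M)$ is a symmetry of $U_2$ only for constant $A$; the terms $iA_0(s^M)$ drop out of $G$ exactly on the rays, not off them.

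The missing idea is that your substitution already does this work. For $t=\rho e^{i\theta}$ and $s=\Phi(t)$,
\[
\frac{\partial}{\partial\theta}G_\phi(\rho e^{i\theta})=-\Im\bigl(it\phi'(t)\bigr)=2M\,\Im\bigl(s^{2M}\bigr).
\]
So on each circle $|t|=\rho$ the critical points of $G_\phi$ are exactly the points $\Psi(s)$ with $s^{2M}\in\R$, and the minimum of $G_\phi$ over the circle is attained at one of them. The points with $s^{2M}<0$ have $G\gtrsim |s|^{2M}$. Hence $G_\phi\ge 0$ near $0$ if and only if $G\ge 0$ on the rays $s^{2M}>0$, and any zero of $G_\phi$ with $t\neq 0$ must lie on such a ray.

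This gives three things with no off-ray analysis, and it is exactly how the paper argues:
\begin{itemize}
\item the converse;
\item the isolated boundary point, since on-ray $G\gtrsim r^{2M+2MK}$;
\item the boundary curve in the curve type, since on-ray $G\equiv 0$.
\end{itemize}
With this in place, the rest of your case analysis is essentially the paper's, organized a little more directly. That analysis takes the first coefficient $\lambda s^m$ of $L$ not of the form $ias^{Mj}$ with $a\in\R$, forces $\mu^m=1$ and $\Re\lambda>0$, and gets $M=1$ from injectivity when no such term exists.
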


The parametrization above is initially off-putting
but really only involves some basic conditions
on a power series $L$ and the computation of an 
integral (which just operates as a multiplier on 
the coefficients).
Even more simply, if $H(s) = s^{M} e^{ML(s)}$,
then
\[
is^{2M} 
+ 2Mi\int_{[0,s]} w^{2M} L'(w) dw = 2i\int_{[0,s]} w^{2M} \frac{H'(w)}{H(w)}dw.
\]
In \eqref{M1example} above we wrote out the case $M=1, L(s)=is$. The simplest curve type example is $M=1$, $L(s)\equiv 0$ which corresponds to the 
globally stable polynomial $w-z^2$ but as \eqref{newexample} demonstrates, there are other $L$ with this property.
The simplest example for $M=2$ (which is necessarily
of isolated type) is $L(s) = is^2+s^4+s^5$.
See Example \ref{M2ex}.

We can also describe the admissible numerator
ideal for polynomials with zero set of the isolated type above.
In this setting the admissible numerator ideal
is defined
\[
\mathcal{I}_p^{\infty}=\{q\in \mathbb{C}\{z,w\}\colon q/p \, \, \textrm{is bounded on \,\,} U_2 \cap B((0,0), \epsilon) \, \textrm{for some}\, \epsilon>0\}.
\]
One consequence of the description above is that in
the isolated type case, $\phi$ has the form  
\begin{equation}\label{phi0}
\phi(t) = \phi_0(t^M) + O(t^{2M(1+K)})
\end{equation}
where $\phi_0(t) \in \C[t]$ has degree less than $1+K$.
\footnote{It is tempting to try to characterize $\phi$ directly
based on this but it does not seem so straightforward.}
See Corollary \ref{cor:compare}.

\begin{restatable}{theorem}{admissiblenonbasic} \label{admissiblenonbasic}
    Let $p(z,w)$
    be a polynomial 
    with no zeros in $U_2$ near $(0,0)$
    whose zero set near $(0,0)$
    is (injectively) parametrized by 
    \[
    t \mapsto (ct^M, \phi(t))
    \]
    where $\phi$ is of the isolated type as in Theorem \ref{mainthm}.
    Then, the admissible numerator ideal for $p$
    is given by 
    \begin{equation} \label{productideal}
    (w-\phi_0(\bar{c} z), z^{2(1+K)})^{M}
    \end{equation}
    where we refer to the data $K, \phi_0$ in Theorem \ref{mainthm}
    and \eqref{phi0}.
\end{restatable}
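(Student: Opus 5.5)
Locally near $(0,0)$ we factor $p$ by Weierstrass preparation and Puiseux as a unit times
\[
\prod_{j=0}^{M-1}\bigl(w-\phi(\mu^j t)\bigr),\qquad t=(\bar c z)^{1/M},\quad \mu=e^{2\pi i/M}.
\]
Boundedness of $q/p$ near $0$ in $U_2$ is unaffected by the unit. The plan is to compare $|p|$ with $|w-\phi_0(\bar c z)|+|z|^{2(1+K)}$ raised to the $M$-th power on $U_2$. Once that comparison is in hand, the problem reduces to an ideal-membership question for the ``model'' ideal $J=(w-\phi_0(\bar cz), z^{2(1+K)})$.

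\textbf{Step 1 (comparison).} Write $u=w-\phi_0(\bar c z)$. By \eqref{phi0}, each factor satisfies
\[
w-\phi(\mu^j t)=u+O(|z|^{2(1+K)}).
\]
The bound $|p|\lesssim (|u|+|z|^{2(1+K)})^M$ is then immediate. The reverse inequality is the key claim: each factor $|w-\phi(\mu^jt)|$ is bounded below by a constant times $|u|+|z|^{2(1+K)}$ on $U_2$ near $0$.

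For this we use the structure from Theorem \ref{mainthm}. The function $G_\phi(t)=|t|^{2M}-\Im\phi(t)$ should be comparable to $|t|^{2M(1+K)}$. To see this, write $H=s^Me^{ML}$ and compute $G$ in the $s$-variable. The purely imaginary polynomial part $iA_0(s^M)$ contributes nothing to $G$ to the relevant order, while $s^{2MK}L_1$ with $\Re L_1(0)>0$ produces a term $\asymp |s|^{2M(1+K)}$ (this is presumably the content of Corollary \ref{cor:compare}). Then for $(z,w)\in U_2$ we have $\Im w>|z|^2=|t|^{2M}$, so
\[
\Im\bigl(w-\phi(\mu^jt)\bigr) > G_\phi(\mu^jt)\gtrsim |z|^{2(1+K)}.
\]
Combined with the triangle inequality $|w-\phi(\mu^jt)|\ge |u|-C|z|^{2(1+K)}$, a two-case argument (whether $|u|$ is larger or smaller than $2C|z|^{2(1+K)}$) gives the lower bound.

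\textbf{Step 2 (sufficiency).} Any element of $J^M$ is a combination of the products $u^a z^{2(1+K)b}$ with $a+b=M$. Each such product is bounded by $(|u|+|z|^{2(1+K)})^M\lesssim|p|$, so $J^M\subset\mathcal I_p^\infty$.

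\textbf{Step 3 (necessity).} This is the main obstacle. Take $q$ with $|q|\lesssim(|u|+|z|^{2(1+K)})^M$ on $U_2$. Change variables $(z,u)$; this is a local biholomorphism preserving ideals. Expand $q=\sum_a q_a(z)u^a$ and reduce modulo $J^M$, so that only monomials $z^k u^a$ with $k<2(1+K)(M-a)$ remain; we must show all of these vanish.

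The approach is to test along curves inside $U_2$. Set $u=\lambda z^{2(1+K)}\cdot(\text{weight})$, or more generally take $w=\phi_0(\bar c z)+\tau |z|^{2(1+K)}$-type points, which lie in $U_2$ when $\Im\tau$ is large because $\Im\phi_0(\bar c z)\le |z|^2+O(|z|^{2(1+K)})$. Along $u=\lambda z^{2(1+K)}$, with $\lambda$ in an open set and $z$ ranging over an open sector, the bound forces the holomorphic function
\[
\sum_a q_a(z)\lambda^a z^{2(1+K)a}=O(|z|^{2(1+K)M}).
\]
Varying $\lambda$ then separates the coefficients, giving $\operatorname{ord} q_a\ge 2(1+K)(M-a)$, which is exactly membership in $J^M$.

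The delicate point is verifying that such test points lie in $U_2$ for $z$ in a set large enough to determine the holomorphic order. Since the allowed region is governed by real parts, I would first check that for $\Im\lambda>C$ the points lie in $U_2$ for all small $z$ in a full punctured disc. If that fails, I would fall back to $z$ real-analytic curves and use one-variable order estimates.
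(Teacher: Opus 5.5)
\textbf{Where Step 3 breaks.} Your Steps 1 and 2 match the paper: Corollary \ref{cor:compare} gives $|p|\asymp(|u|+|z|^{2(1+K)})^M$, whence $J^M\subset\mathcal I_p^\infty$. One correction there turns out to matter. $G_\phi$ is \emph{not} comparable to $|t|^{2M(1+K)}$; only the lower bound $G_\phi(t)\gtrsim|t|^{2M(1+K)}$ holds (Theorem \ref{thm:Glower}, proved by minimizing over angular critical points). Your remark that $iA_0(s^M)$ contributes nothing to $G$ is true only for $s$ on the rays $\R\mu^k$. In generic directions $G_\phi(t)\asymp|t|^{2M}=|z|^2$.

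This is exactly what breaks Step 3. Since $\Im\phi_0(\bar cz)=|z|^2-G_\phi(t)+O(|z|^{2(1+K)})$, the point $(z,\phi_0(\bar cz)+\lambda z^{2(1+K)})$ lies in $U_2$ only if $G_\phi(t)<\Im(\lambda z^{2(1+K)})+O(|z|^{2(1+K)})$. The inequality you quote bounds $\Im\phi_0$ from the wrong side. Because $\phi_0(x)=ix^2+\cdots$, we have $\Im\phi_0(\bar cz)\approx|z|^2\cos(2\arg(\bar cz))$. So the admissible $z$ form a cusp of angular width $O(|z|^{K})$ about the images of the minimizing curves $t=\Psi(r\mu^k)$, $r\in\R$. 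They never form a punctured disc or an open sector. Already for $w-iw^2-iz^2$ (where $M=K=1$, $c=1$, $\phi_0(z)=iz^2$), the test point $(z,iz^2+\lambda z^4)$ is in $U_2$ if and only if $\Im(\lambda z^4)>2(\Im z)^2$.

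\textbf{The missing ingredient: the choice of curve.} Your fallback to real curves is therefore forced, and the real content of the proof is the choice of curve, which you leave unspecified. The curve must track $t=\Psi(r)=re^{L(r)}$ to angular accuracy $O(r^{MK})$. This uses the structure $L(s)=iA_0(s^M)+s^{2MK}L_1(s)$ from Theorem \ref{mainthm}, and amounts to the sharpness of Theorem \ref{thm:Glower} along the critical curves, $G_\phi(\Psi(r))\asymp r^{2M(1+K)}$, by \eqref{Gnonneg}.

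A straight ray does not work in general. Take $M=1$, $c=1$, $K=2$, $L(s)=ias+s^4$ with $a\in\R\setminus\{0\}$. One finds $\phi(t)=it^2+\frac{4a}{3}t^3-2ia^2t^4+O(t^5)$, so $G_\phi(x)=2a^2x^4+O(x^5)\gg x^6$ for real $x$. Your test points on the ray $z>0$ therefore leave $U_2$.

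The paper (with $c=1$) uses, for real $r$ and $B>0$,
\[
z(r)=r^Me^{iMA_0(r^M)},\qquad w(r,B)=ir^{2M}-2M\int_0^r x^{2M}\tfrac{d}{dx}A_0(x^M)\,dx+iBr^{2M(1+K)}.
\]
These points lie in $U_2$ for trivial reasons, since $\Im w(r,B)-|z(r)|^2=Br^{2M(1+K)}$. Then \eqref{phipsi} and \eqref{phi0} give $w(r,B)-\phi_0(z(r))=(iB-F(r))\,r^{2M(1+K)}$ with $F$ analytic. Along such a curve your coefficient-separation argument goes through essentially verbatim, with $B$ in place of $\lambda$.
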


The expression \eqref{productideal} is a product ideal:
combinations of $(w-\phi_0(\bar{c} z))^j z^{2(1+K)(M-j)}$
for $j=0,\dots, M$ in $\C\{z,w\}$.

Theorem \ref{admissiblenonbasic} focuses
on when $p$ has a single irreducible Weierstrass
factor around $(0,0)$ with an isolated type
parametrization.
What about determining admissible numerators more 
generally in two variables?
In general, a polynomial with no zeros in $U_2$ near
$(0,0)$ could factor into several irreducible Weierstrass
polynomials and each would have its zero set parametrized
by a Puiseux type of parametrization---and these could
each be either basic, isolated, or curve types. 
We leave it for future work to determine the admissible
numerator ideal.  
In the bidisk setting, this is the
step originally established in Koll\'ar \cite{Kollar}.
(One difference at a purely algebraic level
is that the admissible numerator ideal
in the bidisk setting involved real polynomials,
and this played some role in the proof in \cite{Kollar}.
Here the analogue, $\phi_0$, may not be real or purely imaginary.)

The next big question is if one can construct
globally defined polynomials with the behaviors
described in our local theory.
Section \ref{examplesection} goes through 
several examples of polynomials 
exhibiting various aspects of the local theory above.
A construction of Rudin from \cite{Rud}
makes it possible to construct global examples 
of polynomials with parametrizations satisfying
$M=1$ and $K$ arbitrarily large.
See Section \ref{Rudin}.

Determinantal formulas are valuable in
the polydisk (and especially bidisk) setting,
but not apparently in the ball setting.
We point out in Section \ref{sec:row}
that a natural class of determinantal polynomials to consider
in the ball setting, while non-vanishing on $\B_d$,
have particularly simple boundary behavior.
We have in mind the polynomials
\[
\det\left(I - \sum_{j=1}^{d} z_j A_j\right)
\]
where $(A_1,\dots, A_d)$ is a row contraction of
$N\times N$ matrices (meaning $\sum_{j=1}^{d} A_jA_j^* \leq I_N$),
which turn out to have finitely many boundary zeros,
each of which contributes a degree \emph{one} factor
of the polynomial.  See Proposition \ref{factordet}.

We conclude the paper with Section \ref{sec:global}
which gives a method to construct polynomials
with the correct local behavior however
we do not see how to enforce that the constructed polynomials
have no zeros in $U_2$.

Suppose we have an irreducible polynomial $p\in \C[z,w]$
with no zeros in $U_2$ and a zero at $(0,0)$.
By Proposition \ref{basictwovar}, 
$p$ may have branches of the basic types:
\[
s\mapsto (c s^M, \psi_0 i t^{a_0} + o(t^{a_0}))
\]
where either $a_0 > 2M$ or ``$a_0=2M$ and $\psi_0<1$.''
It is easy to construct polynomials
with this local behavior for arbitrary $M$:
\[
s\mapsto (s^M, is^{2M+1}) \quad \text{ and } \quad s\mapsto (s^M, (i/2)s^{2M}+s^{2M+1})
\]
are parametrizations with this behavior and
the corresponding polynomials with zero sets 
given by the ranges of these are
\[
z^{2M+1}+w^{2M} \text{ and } (w-(i/2)z^2)^{M}-z^{2M+1}.
\]
However, these are not non-vanishing throughout $U_2$.
Nevertheless, Examples \ref{Nextsimplest} and \ref{Deg4} exhibit
polynomials with the correct
local and global behavior for the basic type 
(as well as one instance of \emph{isolated type}), 
so we would really like
to find a way to construct polynomials 
non-vanishing on $U_2$ with boundary zeros
of isolated or curve types.

So, let's suppose $p$ has at least one branch
with local parametrization
\begin{equation} \label{gamma}
s \mapsto \gamma(s) = \left(cs^Me^{M L(s)}, is^{2M} +i2M \int_{[0,s]} x^{2M} L'(x)dx\right)
\end{equation}
where $L(s) \in \C\{s\}$, $L(0)=0$, $|c|=1$, $M \in \mathbb{N}$ with additional details 
laid out in Theorem \ref{mainthm} depending
on isolated type versus curve type.

\begin{restatable}{theorem}{algthm} \label{algthm}
If the above parametrization \eqref{gamma} is algebraic,
then $f(s) = e^{L(s)}$ is an algebraic function
whose analytic continuation has no
zeros or poles (nor algebraic zeros/poles)
over points of $\C_{*} = \C\setminus \{0\}$.
In addition, the Puiseux-Laurent series of (any analytic
continuation) of $f'(1/s)/f(1/s)$ around $0$ cannot contain the term
$s^{2M+1}$.  

\end{restatable}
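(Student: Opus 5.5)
The plan is to turn the integral formula in \eqref{gamma} into a differential relation between the two coordinates, and then use the fact that an algebraic function never has a logarithmic singularity. Write $\gamma(s) = (z(s), w(s))$ and $f = e^{L}$, and read ``algebraic parametrization'' as saying that $z(s)$ and $w(s)$ are algebraic functions of $s$.

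\emph{Step 1: $f$ is algebraic.} From $z(s) = c s^M f(s)^M$ we get $f^M = \bar c\, z(s) s^{-M}$, which is algebraic, so $f$ is algebraic.

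\emph{Step 2: the differential relation.} With $H = s^M f^M$ we have $H'/H = M/s + M f'/f$. The integral formula then gives
\[
w'(s) = 2iM s^{2M-1} + 2iM s^{2M} \frac{f'(s)}{f(s)}
\]
near $s=0$. Both sides are algebraic, so by the identity theorem this relation persists along every analytic continuation. It holds on every branch of the Riemann surface of the algebraic function field generated by $s$, $f$ and $w$.

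\emph{Step 3: no zeros or poles of $f$ over $\C_*$.} Suppose some continuation of $f$ had a zero or pole, possibly algebraic, over a point $s_0 \neq 0$. In a local uniformizer $\tau$ with $s - s_0 = \tau^k$, we would have $f = \tau^{m}\cdot(\text{unit})$ with $m \in \mathbb{Z}\setminus\{0\}$. Then
\[
\frac{f'}{f}\,ds = \left(\frac{m}{\tau} + \text{holomorphic}\right) d\tau .
\]
Substituting into the relation of Step 2, the differential $dw$ has residue $2iM s_0^{2M} m$ at that place. This residue is nonzero precisely because $s_0 \neq 0$. But $w$ is algebraic, so it has a Puiseux expansion in $\tau$, and $dw = w_\tau\, d\tau$ has zero residue, since $\tau^{-1}$ is never the derivative of a power. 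This is a contradiction.

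\emph{Step 4: the condition at infinity.} Set $s = 1/u$, so $ds = -du/u^2$, and expand any continuation as $g(u) = f'(1/u)/f(1/u) = \sum_q a_q u^q$, a Puiseux--Laurent series. Then
\[
dw = -2iM u^{-2M-1}\,du \;-\; 2iM u^{-2M-2} g(u)\,du .
\]
The first term contributes no $u^{-1}$ term because $M \geq 1$. The coefficient of $u^{-1}\,du$ is therefore $-2iM a_{2M+1}$. In a uniformizer $u = v^k$, this is $k$ times the residue in $v$, and only the monomial $u^{-1}\,du$ produces a $v^{-1}\,dv$ term. Since $w$ is algebraic at this place too, the residue must vanish, so $a_{2M+1} = 0$.

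\emph{Main obstacle.} The delicate part is bookkeeping rather than computation. First, the meaning of ``algebraic parametrization'' must be fixed so that $z$ and $w$ are algebraic in $s$. Second, the ODE relation must be tracked through analytic continuation, including around branch points of $f$ where $f$ itself has no zero or pole; there $f'/f$ is holomorphic in the uniformizer and nothing needs checking. Third, one must correctly identify residues in fractional-power uniformizers. The conceptual core is simply that $dw$ of an algebraic $w$ is residue-free at every place.
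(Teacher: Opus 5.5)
There is a genuine gap, and it sits where you yourself flagged the ``main obstacle.'' You fix the meaning of ``algebraic parametrization'' by assuming that $z(s)$ and $w(s)$ are algebraic functions of $s$. In the paper the hypothesis is weaker: $\gamma$ parametrizes a branch of the zero set of an irreducible polynomial $p(z,w)$, i.e.\ $p(\gamma(s))\equiv 0$. That alone does not make the coordinate functions algebraic in the parameter; for instance $s\mapsto(e^{s},e^{2s})$ lies on $w=z^2$. Under the intended reading, the first conclusion of the theorem, that $f=e^{L}$ is algebraic, is the substantive part. Your Step 1 assumes it, which is why it comes out trivial.

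The missing idea is that the parameter $s$ in \eqref{gamma} is tied intrinsically to the curve. From \eqref{gamma} you have $z'(s)=cMs^{M-1}f^M(1+sf'/f)$ and $w'(s)=2iMs^{2M-1}(1+sf'/f)$, so $z\,w'=2is^{2M}z'$. Combine this with $p_z z'+p_w w'=0$ and divide out the factor $Ms^{M-1}(1+sL')$. This gives $z\,p_z(\gamma(s))+2is^{2M}p_w(\gamma(s))=0$, so $s^{2M}$ is a rational function on the curve $\{p=0\}$. Now eliminate $w$ between $p(z,w)$ and $Q(z,w,t)=zp_z+2itp_w$. The paper does this with the resultant in $w$, which it shows is not identically zero using irreducibility of $p$. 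The result is a nonzero $R(z,t)$ with $R(cs^Mf(s)^M,s^{2M})\equiv 0$, a nontrivial polynomial equation for $f$ over $\C(s)$. Only then is $f$ algebraic. After that, $w$ is algebraic over $\C(s)$ because it satisfies $p(cs^Mf^M,w)=0$, and $p$ must genuinely involve $w$ since $z(s)$ is nonconstant.

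Once this step is supplied, your Steps 2--4 are correct and match the substance of the paper's argument. The paper first shows $w\in\C(s,f)$ by differentiating a minimal polynomial. It then argues that a zero or pole of $f$ at $s_0\neq0$, or an $s^{2M+1}$ term at infinity, would force a logarithmic singularity in the integral. Your formulation, that $dw$ of an algebraic $w$ has zero residue at every place (including in fractional uniformizers), is a cleaner version of the same point. It also shows that algebraicity of $w$ over $\C(s)$ is all that is needed; membership in $\C(s,f)$ is not.
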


By algebraic zero or pole, we mean
that at a branch point $s_0$ of the continuation of
$e^{L}$
the function necessarily has a Puiseux-Laurent
development
\[
\sum_{j=n}^{\infty} a_j(s-s_0)^{j/k} \qquad a_n\ne 0
\]
and a zero would correspond to $n>0$ 
while a pole would correspond to $n<0$.
The conditions in the theorem are
designed to avoid logarithmic singularities
in the integral in \eqref{gamma}.  

\begin{remark}\label{Abel}
This is not a full characterization.  
Although we do not pursue this fully here,
essentially what is required is that 
$z^{2M} \frac{dw}{w}$ is an exact meromorphic
$1$-form on the Riemann surface associated
to the algebraic function $w=f(z)$.  $\spadesuit$
\end{remark}

The theorem gives a way to begin constructing
polynomials whose zero set has at least one branch
with the desired local behavior at $(0,0)$.
First, to get an algebraic function
with no zeros or poles in $\C_{*}$,
we need an irreducible 
polynomial $P(s,y) = \sum_{j=0}^{m}P_j(s)y^j \in \C[s,y]$ 
such that $P_m(s)$ has no zeros in $\C_{*}$ (to avoid
poles in $\C_{*}$)
and $P(s,0) = P_0(s)$ has no zeros in $\C_{*}$.
Necessarily, $P_0$ and $P_m$ are monomials.
Other than this, $P(s,y)$
needs to have an analytic branch through $(0,1)$
with logarithm satisfying 
the conditions on $L$ in Theorem \ref{mainthm}.

Section \ref{sec:global} goes through several examples
of $f(s)$ from Theorem \ref{algthm} such as
\begin{align*}
\text{Example \ref{revNext}:} \quad & ((1+4s^4)^{1/2}+2s^2)^{1/2} \text{ with polynomial } w-iw^2-iz^2\\
\text{Example \ref{ex:algsimple}:} \quad & 1+(1+is)^{1/2}\\
\text{Example \ref{ex:exotic}:} \quad & ((1-4s^4)^{1/2}+2is^2)^{1/2} \text{ with polynomial } w+w^2-iz^2
\end{align*}
We also go through several ``non-examples'' of Theorem \ref{algthm}
and Remark \ref{Abel} which essentially lead us through 
concrete instances of Abel's theory of integrals.

The table of contents gives an accurate road map for the paper.

\tableofcontents

\section{Simple zero on the boundary}

As discussed in the introduction
the unit ball $\B_d$ is biholomorphically equivalent to the Siegel half-space
\begin{equation}
U_d = \{(z_1,\dots,z_{d-1},w): \Im w > |z|^2\} 
\label{siegelupperhp}
\end{equation}
via the map 
\begin{equation}
F : U_d \to \B_d \qquad F(z_1,\dots, z_{d-1},w) 
= \left( \frac{2z_1}{i+w},\dots, \frac{2z_{d-1}}{i+w}, \frac{i-w}{i+w}\right)
\label{siegeltoball}
\end{equation}
\begin{equation}
F^{-1}: \B_d \to U_d \qquad F^{-1}(z_1,\dots,z_{d-1},w) 
= \left(\frac{iz_1}{1+w},\dots,\frac{iz_{d-1}}{1+w} , i\frac{1-w}{1+w}\right).
\label{balltosiegel}
\end{equation}

The domain $U_d$ and simple variations are referred to as 
\emph{unbounded realizations of the ball} in \cite{dA2}.
\footnote{The domain $U_d$ is one among many `classical' domains
studied by Siegel so perhaps a generic term is more appropriate.}
Thus, a polynomial $p(z_1,\dots, z_{d-1},w)$ is non-vanishing on $\B_d$ if and only if the polynomial  
\[
(i+w)^{\deg p} p( F(z,w))
\]
 is non-vanishing
on $U_d$.  
Also, a rational function $q/p$ is bounded near $(0,\dots,0,1)$ in $\B_d$ if and only if $q(F(z,w))/p(F(z,w))$
is bounded near $0$ within $U_d$.  

To begin we try to understand a simple boundary zero; 
namely where $p$ is non-vanishing
near $0$ in $U_d$, $p(0)=0$, and $\nabla p(0)\ne 0$.  
\begin{lemma} \label{psmooth}
Suppose $p(z,w) \in \C[z_1,\dots, z_{d-1},w]$ is
non-vanishing on a neighborhood of $0\in \C^d$ intersected with $U_d$.
Assume $p(0) = 0$ and $\nabla p(0) \ne 0$.
Then, $\frac{\partial p}{\partial z_j}(0) = 0$ for $j=1,\dots, d-1$
so that necessarily $\frac{\partial p}{\partial w}(0) \ne 0$.  
\end{lemma}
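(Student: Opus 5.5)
The plan is to argue by contradiction via a first-order Taylor expansion along a curve lying in $U_d$. Write
\[
a_j = \frac{\partial p}{\partial z_j}(0) \quad (j=1,\dots,d-1), \qquad b = \frac{\partial p}{\partial w}(0),
\]
so that $p(z,w) = \sum_j a_j z_j + b w + O(|z|^2+|w|^2)$ near $0$. Suppose some $a_j \neq 0$, so that the vector $a=(a_1,\dots,a_{d-1})$ is nonzero. The aim is to produce a point of $U_d$ arbitrarily close to $0$ at which $p$ vanishes, which contradicts the hypothesis.

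The key observation is that the linear part $\ell(z,w) = \langle z, \bar a\rangle + bw$ is dominated by the $z$-directions at scales where $\Im w \approx |z|^2$. Fix the unit direction $u = \bar a/|a|$, so that $\sum_j a_j u_j = |a|$, and consider points $(\zeta u, w)$ with $\zeta\in\C$ small. Choose $w = i\cdot 2|\zeta|^2$. This point lies in $U_d$ whenever $\zeta\neq 0$. Along such points,
\[
p(\zeta u, w) = |a|\zeta + O(|\zeta|^2).
\]

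Next, fix $w$ in this way and vary $\zeta$ to locate a zero. More cleanly, for fixed small $\epsilon>0$, consider
\[
g_\epsilon(\zeta) = p(\zeta u, 2i\epsilon^2), \qquad |\zeta| \le \epsilon\cdot\tfrac{1}{2}\cdot\sqrt{2}\cdot\ldots
\]
Concretely, take the disk $|\zeta|<\epsilon$ with $w = 2i\epsilon^2$. Every point $(\zeta u, 2i\epsilon^2)$ with $|\zeta|<\epsilon$ satisfies $\Im w = 2\epsilon^2 > |\zeta|^2$, so it lies in $U_d$. Write $g_\epsilon(\zeta) = |a|\zeta + h(\zeta)$. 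On $|\zeta| = \epsilon/2$ we have $|h(\zeta)| = O(\epsilon^2)$, because both $bw$ and the quadratic remainder are $O(\epsilon^2)$. Hence $|h| < |a|\epsilon/2$ on that circle once $\epsilon$ is small. By Rouché's theorem, $g_\epsilon$ has a zero with $|\zeta|<\epsilon/2$. This gives a zero of $p$ in $U_d$ within distance $O(\epsilon)$ of $0$, which is the desired contradiction. Therefore every $a_j$ vanishes, and since $\nabla p(0)\neq 0$, we conclude $b\neq 0$.

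The only delicate point is choosing the scaling so that the domain constraint $\Im w>|z|^2$ holds while the $w$-term stays negligible against the linear $z$-term. The choice $\Im w \sim \epsilon^2$ with $|z|\sim\epsilon$ does exactly this. The rest is a routine uniform estimate of the remainder, which is immediate from analyticity of the polynomial $p$.
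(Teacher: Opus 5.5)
Your proof is correct and is essentially the paper's argument. Both produce a zero of $p$ in $U_d$ by solving $p=0$ for a $z$-variable at a fixed $w$ with small positive imaginary part. The paper does this with the implicit function theorem, which gives $z_1=\phi(0,\dots,0,iy)=O(y)$, so $|z_1|^2=O(y^2)<y=\Im w$. You instead apply Rouch\'e's theorem to the slice $\zeta\mapsto p(\zeta u,2i\epsilon^2)$, which is a hands-on version of the same step. The only blemish is the unfinished display bounding $|\zeta|$ just before ``Concretely,'' which should simply be deleted.
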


\begin{proof}
If say $\frac{\partial p}{\partial z_1}(0) \ne 0$, then
by the implicit function theorem
the zero set of $p$ can be parametrized locally 
via $z_1 = \phi(z_2,\dots, z_{d-1},w)$
for some analytic $\phi$ vanishing at $0$ and satisfying
\[
\Im w \leq |\phi|^2 + |z_2|^2 +\dots + |z_{d-1}|^2
\]
for all $(z_2,\dots,z_{d-1},w)$ sufficiently small
(since $p$ is non-vanishing in $U_d$).
Since the right hand side vanishes to order $2$,
we obtain $\Im w \leq 0$ for all sufficiently small $w$
which is impossible (pick $w \in i(0,\infty)$).
\end{proof}

Thus, for a simple zero the zero set of $p$ is
locally parametrized via $w = \phi(z_1,\dots, z_{d-1})$
satisfying
\[
\Im \phi(z_1,\dots, z_{d-1}) \leq |z|^2
\]
where $z = (z_1,\dots, z_{d-1})$.  
Just as above, $\phi$ can have no linear term.
If $\phi$ has a quadratic homogeneous term $\Phi_2(z)$,
it must satisfy 
\[
\Im \Phi_2(z) \leq 1
\]
for $|z|=1$.  

\begin{lemma} \label{Autonne}
Suppose we have a quadratic form $z^t A z$, here written using
a symmetric complex matrix $A$, 
satisfying
\[
\Im (z^t A z) \leq 1 
\]
for $|z|=1$.
Then, $A$ must be contractive.
\end{lemma}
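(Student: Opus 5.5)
The plan is to use the fact that the operator norm of a complex symmetric matrix is attained by a vector $z$ with $Az = \|A\|\,\bar z$. This is Autonne--Takagi: $A = U\Sigma U^t$ with $U$ unitary and $\Sigma$ diagonal, nonnegative, and with the largest singular value $\sigma_1 = \|A\|$ in the first entry. Then we rotate the phase of $z$ so that the quadratic form lands on the positive imaginary axis.

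Concretely, write $A = U\Sigma U^t$ and let $u_1$ be the first column of $U$. Take $z = \overline{u_1}$, a unit vector. Then $U^t z = U^t \overline{u_1} = \overline{U^* u_1} = \overline{e_1} = e_1$, so
\[
z^t A z = (U^t z)^t \Sigma (U^t z) = e_1^t \Sigma e_1 = \sigma_1 = \|A\|.
\]
Next replace $z$ by $e^{i\pi/4} z$, which is still a unit vector. The quadratic form picks up the factor $e^{i\pi/2} = i$, so the form equals $i\|A\|$ at this vector. The hypothesis $\Im(z^tAz) \le 1$ then gives $\|A\| \le 1$, i.e. $A$ is contractive.

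The only real input is the Takagi factorization itself. To keep things self-contained, one can avoid it: pick a unit vector $x$ with $\|Ax\| = \|A\|$ and set $y = \overline{Ax}/\|A\|$ (assuming $A \neq 0$). Since $A$ is symmetric, $A^* = \bar A$, and maximality of $x$ gives $A^*Ax = \|A\|^2 x$. Then
\[
z^t A z \ \text{ for } \ z = \frac{x + e^{i\theta} y}{|x + e^{i\theta} y|}
\]
needs a polarization computation, which is messier. Simply citing Autonne's theorem, as the lemma's name suggests, is the intended route, so I would invoke it directly.

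The main, minor, obstacle is justifying the choice of $z$ so that $z^t A z$ has modulus $\|A\|$. After that, the phase rotation by $e^{i\pi/4}$ is immediate.
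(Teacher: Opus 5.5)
Your proposal is correct and is essentially the paper's argument: an Autonne--Takagi factorization followed by a phase rotation of $z$. The only difference is cosmetic: you evaluate the form directly at the rotated, conjugated top Takagi vector, where it equals $i\|A\|$. The paper instead first upgrades the hypothesis to $|z^tAz|\le 1$ and then bounds each diagonal entry $D_j$ by $1$.
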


\begin{proof}
By rotating $z$, the inequality is equivalent to the stronger inequality
\[
|z^t A z| \leq 1 \quad \text{ for } \quad |z|=1.
\]
By the Autonne-Takagi factorization (see \cite{HornJohnson} Corollary 4.4.4c), 
there exists
a unitary $U$ so that $U^t A U = D$ is diagonal with non-negative real entries 
$D_1,\dots, D_{d-1}$ on the diagonal.
Replacing $z$ with $Uz$ we have $|z^t D z| \leq 1$
or rather
\[
\left|\sum_{j=1}^{d-1} D_j z_j^2\right| \leq 1.
\]
We can rotate each component individually to see 
\[
\sum_{j=1}^{d-1} D_j |z_j|^2 \leq 1
\]
for $|z|=1$.  This is only possible if $D_j\leq 1$ for all $j$.
Thus, $D$ is contractive and so is $A$ since it has the same
operator norm.
\end{proof}

Thus, we can conclude the quadratic homogeneous term of $\phi$
is of the form $\Phi_2(z) = z^t A z$ where $A$ is a complex symmetric
contractive matrix.  
If $\|A\| <1$ then we get a converse statement;
namely, for $A$ symmetric with $\|A\| < 1$ and
\[
\phi(z) = z^tAz + \text{ higher order},
\]
the graph $w=\phi(z)$ avoids $U_d$ near $0$.  
Indeed, for such $\phi$ and for $(z,w) \in U_d$,
\[
|w- \phi(z)| \geq \Im w - \Im \phi(z) = 
\Im w - |z|^2 + |z|^2- \Im \Phi_2(z) - O(|z|^3)
\gtrsim |z|^2.
\]
Since $|\phi(z)| \lesssim |z|^2$, we also have $|w-\phi(z)| \gtrsim |w|$
within $U_d$.  Therefore,
\begin{equation} \label{dvarestimate}
|w-\phi(z)| \asymp |w|+ |z|^2 \asymp |w|
\end{equation}
for $(z,w) \in U_d$ near $0$.  Putting this together we have the following.

\smooththm*

As before, $(z)^2$ denotes the ideal generated
by $z^{\alpha}$ for $|\alpha|=2$ and $(w,(z)^2)$
denotes the ideal generated by $w$ and $(z)^2$.  

\begin{proof}
By the Weierstrass preparation theorem,
$p$ factors as $p = u(z,w)(w-\phi(z))$
where $u,\phi$ are analytic and $u(0,0) \ne 0$.
By Lemma \ref{psmooth}, $\nabla \phi(0) =0$ and $\frac{\partial p}{\partial w}(0)\ne 0$
and the quadratic homogeneous term $\Phi_2(z)$
of $\phi$ can be represented with a symmetric
matrix $A$ as $\Phi_2(z) = z^t A z$.
By computation, 
\[
A = -\frac{1}{2\frac{\partial p}{\partial w}(0)} Hp(0)
\]
and by assumption this matrix is strictly contractive.

By \eqref{dvarestimate}, every element $q$ of the ideal
$(w,(z)^2)$, has the property that $q/p$ is bounded 
near $0$ within $U_d$.  Conversely, if $q/p$ is bounded
near $0$ within $U_d$, then we wish to 
show $q \equiv 0$ modulo the ideal $(w,(z)^2)$.  
By the Weierstrass division theorem, we can write
\[
q = Q(z,w) w + R(z)
\]
where $Q(z,w), R(z)$ are analytic.  Reducing modulo
the ideal $(w,(z)^2)$, we can disregard $Q$ and assume
$R(z)$ has no terms of order 2 or higher.
Necessarily, if $R/p$ is locally bounded, $R(0)=0$.
So, $R(z) = \sum_{j=1}^{d-1} a_j z_j$ consists of a
linear term.
Since $p$ is locally comparable to $w$ near $0$,
we must have 
\[
\frac{\sum_{j=1}^{d-1} a_j z_j}{w}
\]
bounded in $U_d$ near $0$.  
Setting $\gamma(r) = (\overline{\sgn(a_1)}r,\dots, \overline{\sgn(a_{d-1})}r, i d r^2)$
which takes values in $\partial U_d$ for $r\in(0,\infty)$,
we have
\[
\frac{R(\gamma(r))}{idr^2} = \frac{\sum_{j} |a_j|}{idr}
\]
which goes to infinity as $r\to 0$ unless $R \equiv 0$.
This implies $R\equiv 0$ and $q\equiv 0$.
\end{proof}

The case where $A$ is contractive but not strictly contractive
is already difficult in two variables,
so we leave further study of $d>2$ variables
for the future.

\section{Two variables}
As discussed in the introduction, in two variables
the strategy is to analyze the function from \eqref{requirement}
\[
G_{\phi}(t) = G(t) = |t|^{2M} - \Im (\phi(t))
\]
near $0$.  
We will see that a good way to understand and characterize
the relevant $\phi$ is to study the critical
points of $\theta \mapsto G(re^{i\theta})$ 
for fixed $r$---the so-called angular critical points.
This also makes it possible to determine optimal lower bound behavior
\[
G(t) \gtrsim |t|^{K}
\]
when $G$ is non-negative around $0$.
If $G$ has an isolated zero at $0$ then such behavior is guaranteed 
by the Łojasiewicz inequality (cf. \cite{BKPS2}), but 
through our approach we can determine $K$ and
find curves along which $G$ is minimized.
It is also possible that $G$ is identically zero along
minimizing curves as would happen for the example $w-z^2$---
it is non-vanishing in $U_2$ but vanishes on 
the boundary at points $(\sqrt{i} r,ir^2)$.

How does this relate to finding admissible numerators?
With the given parametrization, $p$ will have Puiseux
factors
\[
w - \phi(\mu^{k} (\bar{c} z)^{1/M})
\]
for $\mu = \exp(2\pi i/M)$, and $k=0,\dots, M-1$.
Then, for $(z,w) \in U_2$
\begin{equation}\label{Puiseuxfactorestimate}
|w - \phi(\mu^{k} (\bar{c} z)^{1/M})| \ge \Im w - \Im \phi(\mu^{k} (\bar{c} z)^{1/M})
\ge
\Im (w) -|z|^2 + G(\mu^{k} (\bar{c} z)^{1/M}) \gtrsim |z|^{K/M}.
\end{equation}
This lower bound enables one to find some admissible numerators.
It requires additional work to show that an educated guess
for the entire ideal of admissible numerators is complete.

\subsection{Basic cases}\label{basiccase}

Some initial reductions are required to get into the most interesting
cases.  
Recall we have
\[
\phi(t) = \sum_{j=1}^{\infty} \phi_j t^j = \psi_0 t^{a_0} + o(t^{a_0})
\]
with $\psi_0 \in i(0,\infty)$.
First, we work through and find the two ``basic'' cases:
\begin{itemize}
\item $a_0 >2M$
\item $a_0=2M$ and $\psi_0 = i \tilde{\psi}_0$ with $0<\tilde{\psi}_0<1$
\end{itemize}
In Section \ref{sec-u}
we address the situation where $a_0=2M$ and $\psi_0 = i$.

\basictwovar*

\begin{proof}
If $a_0 < 2M$ then 
\[
\lim_{r\to 0} \frac{G(r e^{i\theta})}{r^{a_0}} = -\Im ( \psi_0 e^{i a_0\theta}) \geq 0
\]
which is false for $a_0 \theta = 3\pi/2$ since $\psi_0 \in i(0,\infty)$.

Now, if $a_0 > 2M$ then
\[
G(t) = |t|^{2M}( 1 - O(|t|^{a_0-2M})) \gtrsim |t|^{2M}
\]
is positive for $|t|$ sufficiently small.

When $a_0 = 2M$, we can write $t = r e^{i\theta}$ and
\[
G(t) = r^{2M}(1 - \Im(\psi_0 e^{i2M\theta})) - O(r^{2M+1}).
\]
If $G(t) \geq 0$ for $t$ small we have
\[
\lim_{r\to 0} \frac{G(r e^{i\theta})}{r^{2M}} = 1 - \Im(\psi_0 e^{i2M\theta}) \geq 0
\]
which implies $|\psi_0| \leq 1$.  

Conversely, `$a_0=2M$ and $|\psi_0|< 1$' leads to a sufficient condition and we have
\[
G(t) \gtrsim  |t|^{2M}.
\]

The upper bound $G(t) \lesssim |t|^{2M}$ follows in
both cases because $a_0 \geq 2M$.
By \eqref{Puiseuxfactorestimate}, $|w-\phi(\mu^k(\bar{c} z)^{1/M})| \gtrsim |z|^2$.
Since $\phi(\mu^{k} (\bar{c} z)^{1/M})$ vanishes to order $2$ or higher
we also see that
\[
|w - \phi(\mu^{k} (\bar{c} z)^{1/M})| \gtrsim |w|
\]
so that $|w - \phi(\mu^{k} (\bar{c} z)^{1/M})| \asymp |z|^2 + |w|$
since the upper bounds are trivial.
Another way to say this is that $w - \phi(\mu^{k} (\bar{c} z)^{1/M})$
can be replaced with a simple local model `$w$' since $|w| \asymp |z|^2 +|w|$
in $U_2$. 
\end{proof}

\subsection{Admissible numerators in basic case}\label{basiccaseideal}
The polynomial $p$ can be modeled
with a power of $w$ if all of $p$'s irreducible Weierstrass factors belong
to the basic case.

As discussed above, in the basic cases we have that the associated
Puiseux factors of $p$ satisfy
\[
|w - \phi(\mu^{k} (\bar{c} z)^{1/M})| \asymp |w| \asymp |w|+|z|^2
\]
near $(0,0)$ in $U_2$.
Thus, if $p$ has only a single irreducible Weierstrass factor of
the given type,
the admissible numerator ideal 
contains $(w,z^2)^M$ and we can
show that this ideal is everything.
Indeed, given $q \in \C[z,w]$ such that $q/p$ is bounded near $(0,0)$ 
within $U_2$ we claim that we can reduce $q$ mod $(w,z^2)^M$
and obtain zero.  
Since the Puiseux factors of $p$ are comparable to $w$, 
we need only assume $q/w^M$ is bounded near $(0,0)$.

By the Weierstrass division theorem we can 
assume that $q$ is a polynomial in $w$ with degree at most $M-1$.
Writing $q(z,w) = \sum_{j=0}^{M-1} q_j(z) w^j$ we can further
assume that $\deg q_j(z) < 2M-2j$ since $w^j z^{2(M-j)} \in (w,z^2)^M$.  

Note that if $(z,w) \in U_2$, then for any $r>0$, $(rz, r^2 w) \in U_2$.
If we consider the lowest power $k$ of $t$ appearing in $q(rz,r^2w)$,
it must be less than $2M$.  Writing 
\[
q(rz, r^2 w) = \tilde{q}(z,w) r^k + \text{ higher order in $r$}
\]
we have
\[
\left| \frac{q(rz,r^2w)}{r^{2M} w^M} \right| \gtrsim \frac{r^k |\tilde{q}(z,w)| - O(r^{k+1})}{r^{2M}}.
\]
If $q$ is not identically zero, we can choose a point where $\tilde{q}(z,w) \ne 0$
and sending $t\to 0$, we get a contradiction to local boundedness.
Therefore $q\equiv 0$ mod the ideal $(w,z^2)^M$.

\subsection{The non-basic cases: isolated and curve type}\label{sec-u}
Now let us go beyond the basic types.
For this we assume $a_0=2M$ and $\psi_0= i$. 
Then, we can write $\phi(t) = it^{2M} + o(t^{2M})$.
Our goal is to prove the following theorem which 
gives a complete characterization of the curves
above that avoid $U_2$ near $(0,0)$.

\mainthm*

The function $L(s)$ has the same structure 
as the parametrizing functions
that occur in the bi-upper half-plane setting
with some minor modifications.
Actually it exactly mirrors the setting of 
polynomials with no zeros in $\R\times \uhp$
near $(0,0)$ presented as Theorems 3.1 and 3.2 in \cite{Kprep}.
Again we have the feature that boundary
curves force smoothness for irreducible
Puiseux parametrizations but
in the present case there is no
notion of ``real stable'' polynomial 
(i.e. those with no zeros on $\uhp^2$ and $-\uhp^2$).

\begin{example} \label{excurve-u-new}
Setting $M=1$, $L(s) = is$, we obtain
the analytic curve parametrized by
\[
s \mapsto (s e^{is}, is^2 -(2/3)s^3)
\]
which avoids $U_2$ while it parametrizes 
a curve on $\partial U_2$
for $s$ real. $\diamondsuit$
\end{example}

\begin{example}\label{M2ex}
Set $M=2$ and set $L(s) = is^2+s^4+s^5$
which evidently satisfies the ``isolated type'' conditions.
The parametrization is
\[
s\mapsto (s^2 \exp(2is^2+2s^4+2s^5), is^4 -(4/3)s^6 + 2is^8+ (20/9)i s^9).
\]
Note the inclusion of $s^5$ in $L(s)$ prevents this example
from being reducible. $\quad \diamondsuit$
\end{example}

\begin{proof}[Proof of Theorem \ref{mainthm}]
To begin, for each fixed $r$ we analyze critical points
of $\theta \mapsto G(re^{i\theta})$, namely,
solutions to 
\[
\Im(\phi'(re^{i\theta}) ri e^{i\theta}) = 0.
\]
We call solutions
to $\Im(\phi'(t)it) = 0$ angular
critical points for $G$.  
We can solve for the angular critical points
in terms of power series operations on $\phi$.

Since $\phi(t) = it^{2M} + o(t^{2M})$
we have 
\[
\frac{1}{-2M} it\phi'(t) = t^{2M} + o(t^{2M})
\]
and we can factor this into 
\[
\frac{1}{-2M} it\phi'(t) = (\Phi(t))^{2M}
\]
where $\Phi(t)$ is analytic around $0$,
$\Phi(0)=0$, and $\Phi'(0)=1$.
Letting $\Psi(s) = \Phi^{-1}(s)$ be the analytic
inverse function we have
\begin{equation}
\phi'(\Psi(s))i \Psi(s) = -2M s^{2M}.
\label{phipsider}
\end{equation}
Thus, to solve $\Im(\phi'(t)it) = 0$ for $t$ near $0$
we can solve $0 = \Im( \phi'(\Psi(s))i \Psi(s)) = -2M \Im(s^{2M})$
which has the solution $s = r \nu^k$ for
$\nu = \exp(\pi i/2M)$, $k \in \mathbb{Z}$,
$r\in \R$.
In original coordinates the angular critical points
therefore consist of $t=\Psi(r \nu^k)$.

Note that by the chain rule and \eqref{phipsider}
\[
\phi(\Psi(s)) = \int_{[0,s]} 2Mi w^{2M} \frac{\Psi'(w)}{\Psi(w)} dw
\]
and writing $\Psi(s) = s \exp(L(s))$
for analytic $L(s)$ with $L(0) = 0$
we have
\begin{equation}\label{phipsi}
\phi(\Psi(s)) = is^{2M} + 2Mi\int_{[0,s]} w^{2M}L'(w)dw.
\end{equation}

To test if $G_{\phi}(t) \geq 0$ for small $t\in \C$
we only need to check at angular critical
points.  

Note first that
\begin{align*}
G_{\phi}(\Psi(s)) &= 
|\Psi(s)|^{2M} - \Im(\phi(\Psi(s)))\\
&=
|s|^{2M} \exp(2M\Re(L(s))) - 
\Im\left(is^{2M} + 2Mi \int_{[0,s]} w^{2M}L'(w)dw\right).
\end{align*}

Setting $s = r\nu^k$ for $r\in\R$ we calculate
$G$ at the angular critical points
\[
G_{\phi}(\Psi(r\nu^k)) 
=
r^{2M} \exp(2M\Re(L(r\nu^k)))
- (-1)^k r^{2M}
-2M\Re\left(\int_{[0,r\nu^k]} w^{2M} L'(w)dw\right).
\]
If $k$ is odd then the expression is $\gtrsim r^{2M}$
and non-negativity is guaranteed (i.e.
these critical points are likely local maxima)
so we only need to consider $k$ even.
Instead we can replace $\mu = \nu^2= \exp(i\pi/M)$
and consider $k\in \mathbb{Z}$.
So we now have
\begin{equation} \label{Gcritical}
G_{\phi}(\Psi(r\mu^k)) 
=
r^{2M} (\exp(2M\Re(L(r\mu^k)))
- 1)
-2M\Re\left(\int_{[0,r\mu^k]} w^{2M} L'(w)dw\right).
\end{equation}

For a fixed $k$, this expression is
non-negative for all sufficiently
small $r\in \R$ if and only if 
either the expression is identically zero
or there is a lowest order term of 
the form $\beta r^{2K}$ for $\beta>0$.
Let us label the coefficients $L(s) = \sum_{j=1}^{\infty} L_j s^j$ like so.
If $\Re(L_j \mu^{jk}) = 0$ for all $j$
then necessarily \eqref{Gcritical} is 
identically zero.
Let us define for each $k$
\[
m(k) = \min\{j\in\mathbb{N}: \Re(L_j \mu^{kj}) \ne 0\}
\]
and we allow for the possibility that $m(k) = \infty$.
Using this first nonvanishing term we can calculate the
lowest order term in \eqref{Gcritical}.
Here are the ingredients; 
we have $\Re(L(r\mu^k)) = \Re(L_{m(k)}\mu^{k m(k)})r^{m(k)} + o(r^{m(k)})$
so that
\[
\exp(2M\Re(L(r\mu^k))) - 1 = 2M \Re(L_{m(k)}\mu^{km(k)}) r^{m(k)} + o(r^{m(k)})
\]
as well as
\[
\Re\left(\int_{[0,r\mu^{k}]} w^{2M} L'(w) dw \right)= \frac{m(k)}{m(k)+2M} \Re(L_{m(k)} \mu^{k(m(k)+2M)}) r^{m(k)+2M} + o(r^{m(k)+2M}).
\]

Putting this all together 
\begin{equation}\label{Gnonneg}
G_{\phi}(\Psi(r\mu^k)) 
=
\frac{(2M)^2}{m(k)+2M} r^{2M+m(k)} \Re(L_{m(k)} \mu^{km(k)}) + o(r^{2M+m(k)}).
\end{equation}
This expression is non-negative for small $r$ 
if and only if $m(k)$ is even
and
\begin{equation}\label{Lmk}
\Re(L_{m(k)} \mu^{km(k)}) >0.
\end{equation}

Thus, we conclude that $G_\phi(t)$ is non-negative
for small $t\in \C$ if and only if 
for every $k\in \mathbb{Z}$, either $m(k) = \infty$
or $m(k)<\infty$, $m(k)$ is even, and \eqref{Lmk} holds.
We now wish to refine this characterization
and show that $G_\phi(t)$ is non-negative for small $t\in \C$
if and only if exactly one of the following cases holds
\begin{description}
    \item[Case A] $M=1$ and $m(0)=\infty$ OR
    \item[Case B] for all $k$, $m(k)<\infty$, $m(k)$ is even,
    and \eqref{Lmk} holds.
\end{description}
To do this we show that if $m(\ell)=\infty$ for some $\ell$ 
then necessarily $M=1$.

\textbf{Claim}: If $m(\ell) = \infty$ for some $\ell$ then $M=1$.

To see this, we can suppose $\ell=0$ --- namely $m(0)=\infty$ ---
and hence $L(s)$ has all coefficients belonging 
to $i\R$.
(Else, consider $L(s\mu^{\ell})$.)
Assuming $M\ne 1$, we will now show $m(1)<\infty$.
Indeed, if $L(s\mu)$ 
has all coefficients belonging to
$i\R$ then we have $\Re(L_j)= \Re(L_j\mu^j)=0$
for all $j$.  Necessarily $L_j=0$ except
when $\mu^j=\pm 1$ which occurs
exactly when $j$ is a multiple of $M$.
So, $L_j=0$
unless $j$ is a multiple of $M$.  
In this case $L(s) = \tilde{L}(s^M)$ 
for some analytic $\tilde{L}$.
We can trace this back to $\phi$ 
and contradict injectivity of our Puiseux parametrization.
We have $\Psi(s) = s e^{\tilde{L}(s^M)}$
which implies the inverse function
has a similar form, namely $\Phi(t) = t \tilde{\Phi}(t^M)$ for some analytic $\tilde{\Phi}$.
This follows from the Lagrange inversion formula. 
Then, $it \phi'(t) = -2M t^{2M} \tilde{\Phi}(t^{M})^{2M}$ is an analytic function of $t^M$ and hence so is $\phi(t)$
and this contradicts injectivity of our Puiseux parametrization.  
Thus, $m(1)<\infty$ since  $M\ne 1$.

Now we can consider
\begin{equation}\label{considerm}
m = \min\{m(k): k\in \mathbb{Z}\}
\end{equation}
which is necessarily finite since $m(1)<\infty$ (though $m(0)=\infty$).
We must have that $m$ is even
and
\begin{equation}\label{Relm}
\Re(L_m \mu^{km}) \geq 0
\end{equation}
for all $k$ by \eqref{Gnonneg} and \eqref{Lmk}
---in addition
we have strict inequality for some $k$.
Indeed, if $m=m(k)$ then we have strict inequality
and if $m<m(k)$ then $\Re(L_m\mu^{mk})=0$.

The following elementary lemma,
whose proof we omit, isolates a situation occurring several times in what follows.

\begin{lemma}\label{sigmalemma}
    If $\sigma$ is a root of unity and $A\in\C\setminus\{0\}$,
    suppose $\Re(A\sigma^k) \geq 0$ for all $k\in\mathbb{Z}$.
    Then, either $\sigma = -1$ and $\Re(A) = 0$
    or $\sigma = 1$ and $\Re(A)\geq 0$.      
\end{lemma}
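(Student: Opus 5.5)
The plan is to average over the cyclic group generated by $\sigma$. Let $n$ be the order of $\sigma$, so $\sigma^n=1$. If $\sigma=1$, the hypothesis with $k=0$ reads $\Re(A)\geq 0$, which is the second alternative. So assume from now on that $\sigma\neq 1$.

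In that case $\sum_{k=0}^{n-1}\sigma^k = \frac{1-\sigma^n}{1-\sigma}=0$, and therefore
\[
\sum_{k=0}^{n-1}\Re(A\sigma^k) = \Re\Big(A\sum_{k=0}^{n-1}\sigma^k\Big)=0.
\]
Each summand is non-negative by hypothesis. A sum of non-negative reals that equals zero has every term zero, so $\Re(A\sigma^k)=0$ for every $k$. In other words, $A\sigma^k\in i\R$ for all $k$.

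Now take $k=0$ and $k=1$. Both $A$ and $A\sigma$ are nonzero purely imaginary numbers, so their ratio $\sigma = (A\sigma)/A$ is real. The only real roots of unity are $\pm 1$. Since $\sigma\neq 1$, we get $\sigma=-1$, and the case $k=0$ already gives $\Re(A)=0$. This is the first alternative.

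There is no genuine obstacle here. The only point that needs care is the observation that the sum of the powers of a nontrivial root of unity vanishes, because this is what forces each individual real part to be zero.
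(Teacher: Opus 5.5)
Your proof is correct and complete. The paper gives no proof of this lemma (it is stated as elementary, with the proof omitted), so there is no argument to compare against; yours fills that gap cleanly. The orbit $\{A\sigma^k\}$ of a nontrivial root of unity has centroid $0$, so it can lie in the closed right half-plane only if it lies on the imaginary axis. That forces $\sigma=(A\sigma)/A\in\R$, hence $\sigma=-1$.
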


Assuming $m(0)=\infty$ we have $\Re(L_m) = 0$
so the lemma would say $\mu^m = -1$,
but this clearly contradicts the fact that $\Re(L_m \mu^{mk}) > 0$
for some $k$.
This contradiction proves $M=1$.  (End of proof of claim.) $\clubsuit$

Now case A simply means $M=1$ and $L(s) \in i\R[s]$.

We can say more about case B.
We can again define $m$ as in \eqref{considerm},
and we again have \eqref{Relm} for all $k$ with
strictly inequality for some $k$.  
But now referring to Lemma \ref{sigmalemma}
we must have $\mu^m=1$ and $\Re(L_m)>0$
(the other case, $\mu^m=-1$ and $\Re(L_m)=0$ implies $\Re(L_m \mu^{mk}) = 0$
for all $k$).
This means $m$ is a multiple of $2M$.
Thus, in the case at hand of $m(k)<\infty$ for all $k$,
we have $\Re(L_m \mu^{mk}) = \Re(L_m) >0$
so that $m=m(k)$ for all $k$.  
So, this case is equivalent to the simpler condition 
that $m$ (as in \eqref{considerm}) 
is a multiple of $2M$ and $\Re(L_m)>0$.

Still in case B, we have that for $j<m$,
$\Re(L_j \mu^{jk}) = 0$ for all $k$
which implies either $L_j=0$ or $j$ is a multiple of $M$
and $\Re(L_j) = 0$.
This is equivalent to $L(s)$ having the form
\[
L(s) = iA_0(s^M) + s^{2MK}L_1(s)
\]
where $K\in \mathbb{N}$, 
$A_0(s) \in \R[s]$ has degree less than $K$,
and $L_1 \in \C\{s\}$, $\Re(L_1(0))> 0$.

To reiterate, $G_\phi(t)$ is non-negative
for all small $t\in \C$ if and only if
we have one of the two cases
\begin{description}
    \item[Case A] $M=1$ and $L(s) \in i\R[s]$ OR
    \item[Case B] $L(s) = iA_0(s^M) + s^{2MK}L_1(s)$ with all the details
    just stated.
\end{description}
This concludes the proof.
\end{proof}

\begin{remark} \label{Remark:buildL}
To go between the standard Puiseux type parametrization
\[
t\mapsto (ct^M, \phi(t))
\]
with $\phi(t) = it^{2M}+o(t^{2M})$
and our new parametrization
we simply factored 
\[
it \phi'(t)/(-2M) = (\Phi(t))^{2M}
\]
with $\Phi(0)=0,\Phi'(0)=1$.
Set $\Psi(s) = \Phi^{-1}(s)$ and then let $L(s) = \log(\Psi(s)/s)$
where $L(0)=0$. $\spadesuit$
\end{remark}

If we want to check if a given standard
Puiseux parametrization $t\mapsto (ct^M, \phi(t))$
avoids $U_2$ locally, it helps to have some simple criteria to
verify this.

\begin{corollary}\label{simplecriterion}
    If $\phi(t) = it^{2M} + \alpha t^{2M(K+1)} + o(t^{2M(K+1)})$
    where $\Im \alpha <0$, then $t\mapsto (ct^M, \phi(t))$
    avoids $U_2$ for $t$ small
    and the function $L(s)$ associated to $\phi$ begins 
    \[
    L(s) = i\frac{K+1}{2M} \alpha t^{2MK} + o(t^{2MK}).
    \]
\end{corollary}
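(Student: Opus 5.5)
The plan is to compute the function $L(s)$ associated to $\phi$ through the recipe of Remark \ref{Remark:buildL}, keeping only the lowest-order correction. Then I will apply the converse direction of Theorem \ref{mainthm} in the isolated-type case, with $A_0 = 0$.

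\textbf{Step 1 (the factorization).} Set $N = 2MK$, so that $2M(K+1) = 2M + N$. Differentiating gives
\[
\frac{it\phi'(t)}{-2M} = t^{2M} - i(K+1)\alpha\, t^{2M+N} + o(t^{2M+N}).
\]
Taking the $2M$-th root normalized by $\Phi'(0)=1$ yields
\[
\Phi(t) = t\Bigl(1 - \tfrac{i(K+1)\alpha}{2M}\, t^{N} + o(t^{N})\Bigr).
\]

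\textbf{Step 2 (inversion and logarithm).} Inverting a map of the form $t(1+\beta t^N + \cdots)$ only flips the sign of the first correction to this order. So
\[
\Psi(s) = s\Bigl(1 + \tfrac{i(K+1)\alpha}{2M}\, s^N + o(s^N)\Bigr),
\]
which one checks by substituting into $\Phi(\Psi(s)) = s$. Taking the logarithm then gives
\[
L(s) = \log(\Psi(s)/s) = i\tfrac{K+1}{2M}\alpha\, s^{2MK} + o(s^{2MK}).
\]
This is the asserted expansion; the statement writes $t$ for the variable, which is $s$ here.

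\textbf{Step 3 (applying Theorem \ref{mainthm}).} By Step 2, $L(s) = s^{2MK}L_1(s)$ with $L_1(0) = i\frac{K+1}{2M}\alpha$. Hence
\[
\Re L_1(0) = -\tfrac{K+1}{2M}\Im\alpha > 0,
\]
using the hypothesis $\Im\alpha<0$. This is exactly the isolated-type form with $A_0 \equiv 0$. The converse part of Theorem \ref{mainthm} then says the curve parametrized by \eqref{Lparam} avoids $U_2$ near $(0,0)$. Alternatively, the proof of Theorem \ref{mainthm} shows directly that the conditions of Case B are equivalent to $G_\phi \ge 0$ near $0$.

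By construction in Remark \ref{Remark:buildL}, the parametrization \eqref{Lparam} is simply the reparametrization $t = \Psi(s)$ of $t \mapsto (ct^M, \phi(t))$. Hence the original curve also avoids $U_2$ for $t$ small.

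The only real work is the bookkeeping in Steps 1–2, namely confirming that no terms of order strictly between $0$ and $N$ appear in $\Phi$, $\Psi$ or $L$. This holds because $\phi - it^{2M}$ vanishes to order $2M+N$, and root extraction, inversion and logarithm all preserve the absence of intermediate terms. No injectivity assumption is needed, since only the converse direction of Theorem \ref{mainthm} is used.
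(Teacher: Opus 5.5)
Your proposal is correct and follows the paper's proof essentially line for line. Like the paper, you factor $it\phi'(t)/(-2M)=\Phi(t)^{2M}$, invert to get $\Psi(s)=se^{L(s)}$ and read off $L(s)=i\frac{K+1}{2M}\alpha s^{2MK}+o(s^{2MK})$, then invoke the isolated-type converse of Theorem \ref{mainthm} (with $A_0\equiv 0$) via $\Re(i\alpha)=-\Im\alpha>0$. Your added remarks are accurate refinements of what the paper leaves implicit: the variable in the displayed expansion should be $s$, and the reparametrization $t=\Psi(s)$ carries the conclusion back to the original curve.
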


\begin{proof}
We simply need to calculate the first term in $L(s)$
from the construction above.
We have 
\[
it \phi'(t)/(-2M) = t^{2M} -i\alpha (K+1)t^{2M(K+1)} + o(t^{2M(K+1)})
=(\Phi(t))^{2M}
\]
where
\[
\Phi(t) = t\left(1- i\frac{K+1}{2M} \alpha t^{2MK}+ o(t^{2MK})\right)
\]
and the inverse function
\[
\Psi(s) = s\left(1+ i\frac{K+1}{2M}\alpha s^{2MK} + o(s^{2MK})\right) = s e^{L(s)}
\]
where
\[
L(s) = i\frac{K+1}{2M} \alpha s^{2MK} + o(s^{2MK}).
\]
Since $\Re(i\alpha) = -\Im \alpha>0$, Theorem \ref{mainthm}
implies that $t\mapsto (ct^M, \phi(t))$ avoids $U_2$ for $t$ small.
\end{proof}

Other criteria are possible; it is mostly a matter of how
deep into the power series computations one is willing to go.

\subsection{Admissible numerators for isolated type}\label{admnum}

The first step in investigating admissible numerators
is to obtain the lower bound behavior of $G_{\phi}(t)$.

\begin{theorem} \label{thm:Glower}
    For $\phi$ as in Theorem \ref{mainthm} in the isolated
    type case, we have
    \[
    G_{\phi}(t) \gtrsim |t|^{2M(1+K)}.
    \]  
\end{theorem}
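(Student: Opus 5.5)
We work in the coordinate $s$ with $t=\Psi(s)=se^{L(s)}$. Since $\Psi$ is a local biholomorphism with $|\Psi(s)|\asymp|s|$, it suffices to show $G_\phi(\Psi(s))\gtrsim |s|^{2M(1+K)}$ for small $s$. Write $s=re^{i\theta}$. For fixed $r$ the minimum over $\theta$ of $\theta\mapsto G_\phi(\Psi(re^{i\theta}))$ is attained at an angular critical point. The plan is therefore in two steps. First, we bound the values at the critical points from below. Second, we check that this bound is uniform, so that it controls the minimum over the whole circle $|s|=r$.

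\emph{Step 1: values at the critical points.} From the proof of Theorem \ref{mainthm}, the angular critical points of $G_\phi$ are $t=\Psi(r\nu^k)$, where $r$ ranges over $\R$ and $k$ over finitely many residues. At odd $k$ we already have $G_\phi\gtrsim r^{2M}$, which is stronger than needed. At even $k$, i.e.\ $s=r\mu^k$, formula \eqref{Gnonneg} gives a leading term $\frac{(2M)^2}{m+2M}\Re(L_m)r^{2M+m}$ with $m=m(k)=2MK$. In the isolated type case we showed that $m(k)=2MK$ for every $k$ and that $\Re(L_{2MK}\mu^{k\cdot 2MK})=\Re(L_1(0))>0$. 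Since there are only finitely many $k$ modulo $2M$, the error terms $o(r^{2M+2MK})$ are uniform in $k$. Hence $G_\phi(\Psi(r\mu^k))\ge c\,r^{2M(1+K)}$ for all $k$ and all small $|r|$.

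\emph{Step 2: controlling the whole circle.} This is the main obstacle. Knowing the value at the critical points does not immediately bound the minimum over the circle, because as $r$ varies the critical set must exactly capture all minima. We handle this as follows. For each fixed small $r>0$, the function $g_r(\theta)=G_\phi(\Psi(re^{i\theta}))$ is real-analytic and $2\pi$-periodic, so it attains its minimum at some $\theta_0$ with $g_r'(\theta_0)=0$. Now $\Psi(re^{i\theta})$ traces a closed curve around $0$, and $\partial_\theta$ of $G_\phi(\Psi(re^{i\theta}))$ is not the angular derivative of $G_\phi$ at $t$. To get around this, we parametrize directly by $t=\rho e^{i\alpha}$, which is legitimate since $\Psi$ is biholomorphic near $0$. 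For fixed $\rho$, the minimum of $\alpha\mapsto G_\phi(\rho e^{i\alpha})$ occurs at an angular critical point $t$ with $|t|=\rho$. By the analysis in Theorem \ref{mainthm}, every angular critical point with $|t|$ small has the form $t=\Psi(r\nu^k)$ with $r$ real, because $\Im(s^{2M})=0$ forces $s$ onto those rays. Combining this with $|r|\asymp|\Psi(r\nu^k)|=\rho$ and Step 1 gives
\[
\min_{|t|=\rho}G_\phi(t)\gtrsim \rho^{2M(1+K)}.
\]
The points requiring care are, first, that the solution set of $\Im(it\phi'(t))=0$ near $0$ is exactly $\Psi$ of the $4M$ rays, which holds since $\Psi$ is a bijection on a neighborhood of $0$; and second, the uniformity in $k$ noted in Step 1. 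Together these yield $G_\phi(t)\gtrsim|t|^{2M(1+K)}$.
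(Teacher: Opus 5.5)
Your proposal is correct and follows the paper's own proof. The paper likewise minimizes over the circle $|t|=\rho$ at angular critical points $\Psi(x\mu^k)$, uses the fact that the relevant $x$ is comparable to $\rho$, and applies \eqref{Gnonneg} with $m(k)=2MK$ uniformly over the finitely many $k$. The paper solves for $x_k(\rho)$ somewhat more precisely, but only $|x|\asymp\rho$ is needed; your explicit handling of the odd-$\nu$ critical points is a small completeness bonus, and your opening detour through $s=re^{i\theta}$ can simply be deleted.
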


\begin{proof}
For $t=r e^{i\theta}$, $G_{\phi}(t)$ is
minimized with respect to $\theta$ at an angular
critical point with modulus $r$.  
The angular critical points are of the form
$\Psi(x \mu^k)$ for $x\in\R$,
so we need to see the rough dependence of $x$
on $r$ for each $k$.
So we solve $|\Psi(x\mu^k)| = x e^{\Re(L(x\mu^k))} = r$
for $x$ in terms of $r$.
Since $\Re(L(x\mu^k)) = \Re(L_{2MK}) x^{2MK} + o(x^{2MK})$,
we see that 
\[
x_k(r) = r\left(1 - \Re(L_{2MK}) r^{2MK} + o(r^{2MK})\right)
\]
for each $k$ (and the dependence on $k$ is hidden in $o$
term).
Thus,
\[
G_{\phi}(r e^{i\theta})
\geq \min_{k} G_{\phi}(\Psi(x_k(r)\mu^k))
\gtrsim
\min_k x_k(r)^{2M+2MK} \asymp r^{2M+2MK}
\]
by \eqref{Gnonneg}.
\end{proof}

\begin{corollary} \label{cor:compare}
    For $\phi$ as in Theorem \ref{mainthm} in the isolated type case,
    $\phi$ has the form
    \[
    \phi(t) = \phi_0(t^M) + O(t^{2M(1+K)})
    \]
    where $\phi_0$ is a polynomial of degree less than $K+1$.
    In addition,
    \[
    |w - \phi((\bar{c}z)^{1/M})| \asymp |w - \phi_0(\bar{c} z)| + |z|^{2(1+K)}.
    \]
    for $(z,w) \in U^2$ near $(0,0)$.
\end{corollary}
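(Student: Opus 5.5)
The plan is to prove the structural statement about $\phi$ first, directly from the parametrization \eqref{phipsi}. The comparability estimate should then follow from Theorem \ref{thm:Glower} together with a case split.

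\textbf{Structure of $\phi$.} In the isolated type we have $L(s)=iA_0(s^M)+s^{2MK}L_1(s)$. Hence
\[
\Psi(s)=se^{L(s)}=s\,e^{iA_0(s^M)}\bigl(1+O(s^{2MK})\bigr).
\]
\begin{enumerate}
\item Substitute this into \eqref{phipsi}. Since $L'(w)=iMw^{M-1}A_0'(w^M)+O(w^{2MK-1})$, we get
\[
\phi(\Psi(s))=F(s^M)+O(s^{2M+2MK}),
\]
where $F(u)=iu^2+2Mi\int_0^{u^{1/M}} w^{2M}\, d\bigl(iA_0(w^M)\bigr)$ is a polynomial in $u=s^M$ with $F(u)=O(u^2)$.
\item Invert $\Psi$. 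The leading part $s\mapsto s e^{iA_0(s^M)}$ has the form $s\,h_0(s^M)$, so by the same Lagrange-inversion remark used in the proof of Theorem \ref{mainthm} its inverse is $t\,h(t^M)$. The perturbation $1+O(s^{2MK})$ changes the inverse only at relative order $t^{2MK}$. Thus
\[
\Phi(t)=t\,h(t^M)\bigl(1+O(t^{2MK})\bigr),\qquad \Phi(t)^M=t^Mh(t^M)^M+O(t^{M+2MK}).
\]
\item Set $s=\Phi(t)$. This gives $\phi(t)=F(\Phi(t)^M)+O(t^{2M(1+K)})$. Because $F'(u)=O(u)$ and $u\asymp t^M$, perturbing $u$ by $O(t^{M+2MK})$ changes $F$ by
\[
O\bigl(t^M\cdot t^{M+2MK}\bigr)=O\bigl(t^{2M(1+K)}\bigr).
\]
Therefore $\phi(t)=F\bigl(t^Mh(t^M)^M\bigr)+O(t^{2M(1+K)})$. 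The first term is an analytic function of $t^M$.
\item Take $\phi_0$ to be its Taylor polynomial in $u=t^M$ truncated below order $u^{2(1+K)}$. The discarded tail is $O(t^{2M(1+K)})$. This truncation is what fixes the degree bound on $\phi_0$, and I would double-check it against the convention for $\deg A_0$ in Theorem \ref{mainthm}.
\end{enumerate}
I expect this bookkeeping to be the main obstacle: the inversion step and tracking that every error term really has order at least $2M(1+K)$ in $t$. If the direct approach gets messy, I would first verify the claim $\Phi(t)=t\,h(t^M)(1+O(t^{2MK}))$ coefficientwise via Lagrange inversion.

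\textbf{Comparability.} Write $t=(\bar c z)^{1/M}$, so that $|t|^{2M(1+K)}=|z|^{2(1+K)}$. By the first part, $|\phi(t)-\phi_0(\bar c z)|\le C|z|^{2(1+K)}$.
\begin{itemize}
\item The upper bound $|w-\phi(t)|\lesssim |w-\phi_0(\bar cz)|+|z|^{2(1+K)}$ is then immediate from the triangle inequality.
\item For the lower bound, \eqref{Puiseuxfactorestimate} combined with Theorem \ref{thm:Glower} gives $|w-\phi(t)|\gtrsim |z|^{2(1+K)}$ on $U_2$ near $(0,0)$.
\end{itemize}
Now split into two cases:
\begin{itemize}
\item If $|w-\phi_0(\bar cz)|\le 2C|z|^{2(1+K)}$, the right-hand side is $\lesssim |z|^{2(1+K)}\lesssim|w-\phi(t)|$.
\item Otherwise, $|w-\phi(t)|\ge |w-\phi_0(\bar cz)|-C|z|^{2(1+K)}\ge \tfrac12|w-\phi_0(\bar cz)|$, and this dominates the right-hand side up to a constant.
\end{itemize}
Since the estimates are uniform over the $M$ choices of branch of $(\bar c z)^{1/M}$, this gives the stated two-sided comparison.
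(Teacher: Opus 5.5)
Your argument is correct. The comparability half is the paper's argument: the paper only writes ``from this we deduce,'' and your case split is the natural way to fill that in.

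The structural half takes a somewhat different route. The paper never composes with \eqref{phipsi}. It applies Lagrange inversion coefficientwise, $[t^n]\Phi(t)=\frac1n[s^{n-1}]e^{-nL(s)}$, to see at once that $\Phi(t)=t\bigl(B_0(t^M)+O(t^{2MK})\bigr)$. It then reads off $\phi$ directly from the differential relation $it\phi'(t)/(-2M)=\Phi(t)^{2M}$, so no estimate for perturbing an outer function is needed.

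You instead obtain the structure of $\Phi$ from two facts: the $M$-fold rotational symmetry of $s e^{iA_0(s^M)}$, and stability of the inverse under a relative $O(s^{2MK})$ perturbation. You then compose $\phi=(\phi\circ\Psi)\circ\Phi$, which costs the extra step using $F'(u)=O(u)$. The paper's route is shorter. Yours has the advantage of exhibiting $\phi_0$ concretely as the truncation of $F\bigl(t^Mh(t^M)^M\bigr)$, where $F(r^M)$ is precisely the function $w(r,0)$ that reappears in the proof of Theorem \ref{admissiblenonbasic}. It also makes explicit that $\phi_0$ depends only on $A_0$, not on $L_1$.

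On your worry about the degree: truncating below $u^{2(1+K)}$ gives $\deg\phi_0<2(1+K)$, and that is the correct bound. It is also what the paper's own argument produces, since only powers $t^j$ with $M\mid j$ occur for $j<2M(1+K)$. The ``less than $K+1$'' in the statement is a misprint. Since $\phi_0(u)=iu^2+\cdots$ always has degree at least $2$, the printed bound already fails for $K=1$; for instance, in Example \ref{Nextsimplest} one has $K=1$ and $\phi_0(z)=iz^2$. So there is nothing to reconcile with $\deg A_0$, and your version of the statement is the one to prove.
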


\begin{proof}
Recall that Remark \ref{Remark:buildL} shows
how to go between $L(s)$ and $\phi(t)$ via
associated functions $\Phi, \Psi$.
We can calculate the coefficients
of $\Phi = \Psi^{-1}$ using the Lagrange inversion formula.

Using the notation $[t^n]\Phi(t)$ to denote extracting
the coefficient of $t^n$, we have
\[
[t^n]\Phi(t) = \frac{1}{n} [s^{n-1}] e^{-nL(s)}
\]
using the formulation of Lagrange inversion found
in \cite[p.66]{AnalyticCombinatorics}.
For $j\leq 2MK$, the term $s^{j}$ appears in $e^{-nL(s)}$
only when $j$ is a multiple of $M$
since $L(s)$ has the form given in Theorem \ref{mainthm}.
Thus, for $n\leq 2MK+1$, the only powers $t^n$ 
appearing in $\Phi(t)$ must be those with $n-1$ a multiple
of $M$.
Therefore, $\Phi$ takes the form
\[
\Phi(t) = t(B_0(t^M) + O(t^{2MK}))
\]
where $B_0$ is a polynomial of degree less than $2K$.
In turn, since
\[
it\phi'(t)/(-2M) = (\Phi(t))^{2M} = t^{2M}(B_0(t^M) + O(t^{2MK}))^{2M}
\]
we see that $\phi(t)$ contains only powers $t^j$ with
$j$ a multiple of $M$ for $j\leq 2M+2MK$.
Therefore, we may write
\[
\phi(t) = \phi_0(t^M) + O(t^{2M(1+K)})
\]
where $\phi_0(t) \in \C[t]$ has degree less than $1+K$.
Then, $\phi((\bar{c}z)^{1/M}) = \phi_0(\bar{c}z) + O(z^{2(1+K)})$
although the big-O term could involve fractional powers.

By \eqref{Puiseuxfactorestimate} and Theorem \ref{thm:Glower},
we have
\[
|w-\phi((\bar{c}z)^{1/M})| \gtrsim |z|^{2(1+K)}.
\]
for $(z,w) \in U_2$ near $(0,0)$.  
From this we deduce
\[
|w - \phi((\bar{c}z)^{1/M})| \asymp |w - \phi_0(\bar{c} z)| + |z|^{2(1+K)}
\]
for $(z,w)$ in $U_2$ and close to $(0,0)$.
\end{proof}

We can now prove the following.

\admissiblenonbasic*

\begin{proof}
Let $\mathcal{I} =(w-\phi_0(\bar{c} z), z^{2(1+K)})^{M}$
and let $\mathcal{I}_{p}^{\infty}$ denote the
ideal in $\C\{z,w\}$ of $q\in \C\{z,w\}$
such that $q/p$ is bounded near $(0,0)$ within
$U_2$.

Our assumption on $p$ means that
it factors as
\[
p(z,w) = u(z,w) \prod_{j=1}^{M}(w- \phi(\nu^j(\bar{c}z)^{1/M})
\]
with $u(z,w) \in  \C\{z,w\}$, $u(0,0)\ne 0$,
$\nu =\exp(2\pi i/M)$, and $\phi,c, M$ as in Theorem \ref{mainthm}.
The constant $c$ plays no role in the proof so we assume
$c=1$.

By Corollary \ref{cor:compare}, 
we have for $(z,w) \in U_2$ near $(0,0)$
\[
|w-\phi(z^{1/M})| \asymp |w-\phi_0(z)| + |z|^{2(1+K)}
\]
for any branch of $z^{1/M}$ which in particular implies
that any two branches are comparable in $U_2$:
\[
|w-\phi(z^{1/M})| \asymp |w- \phi(\nu^j z^{1/M})|.
\]
This implies 
\[
|p(z,w)| \asymp (|w-\phi_0(z)| + |z|^{2(1+K)})^M
\]
as well as
\[
(w-\phi_0(z))^{j} z^{(M-j)2(1+K)}\in \mathcal{I}_{p}^{\infty}
\]
for $j=0,\dots, M$.
In other words, $\mathcal{I} \subset \mathcal{I}_{p}^{\infty}$.

To prove the reverse, we can take $q(z,w) \in \mathcal{I}_{p}^{\infty}$ and show that $q$ modulo $\mathcal{I}$ equals
$0$.
Writing $q(z,w+\phi_0(z))$ as a power series
at $(0,0)$, we can then write
\[
q(z,w) = \sum_{j=0}^{\infty} q_j(z) (w-\phi_0(z))^{j}
\]
for $q_j(z) \in \C\{z\}$.  Reducing mod $\mathcal{I}$
we can truncate the series at $M-1$ and
assume each $q_j$ is a polynomial of degree less than
$2(1+K)(M-j)$:
\[
q(z,w) = \sum_{j=0}^{M-1} q_j(z) (w-\phi_0(z))^{j}.
\]

We now build a family of (real parameter) curves in $U_2$
that can be used to show each $q_j = 0$.
Define
\[
z(r) = r^M e^{MiA_0(r^M)} 
\]
\[
w(r,B) = ir^{2M} - 2M\int_{0}^{r}x^{2M} \frac{d}{dx}(A_0(x^M)) dx + i Br^{2M(K+1)}
\]
where $r\in\R$, $B>0$ is a parameter, and $A_0$
is the real-coefficient
polynomial of degree less than $K$ 
from Theorem \ref{mainthm}.
Notice that 
\[
r \mapsto (z(r), w(r,B))
\]
defines a real 1-dimensional curve in $U_2$
since $\Im w(r) = r^{2M}+ Br^{2M(K+1)} > |z(r)|^2 = r^{2M}$.
These curves are essentially a perturbation of zeros
of $p$.
Now,
\[
\phi(r e^{iA_0(r^M)}) = \phi_0(z(r)) + O(r^{2M(1+K)})
\]
but also since $L(r) = iA_0(r^M) + r^{2MK}L_1(r)$
we have 
$e^{L(r)} = e^{iA_0(r^M)} + O(r^{2MK})$ and therefore
\begin{align*}
\phi(r e^{L(r)}) 
& = \phi_0(r^M e^{ML(r)}) + O(r^{2M(1+K)}) \\
&= \phi_0(r^M e^{iMA_0(r^M)} + O(r^{2MK+M})) + O(r^{2M(1+K)}) \\
&=
\phi_0(r^M e^{iM A_0(r^M)}) + O(r^{2M(1+K)}) = \phi_0(z(r)) +
O(r^{2M(1+K)})
\end{align*}
with the last line coming from the fact that $\phi_0(t)$ begins with $it^2$.  
On the other hand, by \eqref{phipsi}
\[
\phi(r e^{L(r)}) = ir^{2M} + 2Mi\int_{0}^{r} x^{2M} L'(x)dx
=
w(r,0) + O(r^{2M(1+K)}).
\]
Therefore,
\[
\phi_0(z(r)) = w(r,0) + r^{2M(1+K)}F(r)
\]
for some power series $F(r)$.

Now, since $|p(z,w)| \asymp (|w-\phi_0(z)|+|z|^{2(1+K)})^M$,
evaluating along $(z(r),w(r,B))$ we have
\[
|p(z(r),w(r,B))|
\lesssim (|iB-F(r)|r^{2M(1+K)} + r^{2M(1+K)})^M
\lesssim r^{2M^2(1+K)} (B+1)^M
\]
for $B>0$. 
Assuming $|q/p| \leq C$ within $U_2$ near $(0,0)$
we now have
\[
|q(z(r),w(r,B))| \lesssim r^{2M^2(1+K)} (B+1)^M.
\]
But
\[
q(z(r),w(r,B)) = \sum_{j=0}^{M-1}q_j(z(r))(iB -F(r))^j r^{2Mj(1+K)}
\]
Now we consider the lowest order terms that
can appear.  
Writing $q_j(z) = c_j z^{a_j} + o(z^{a_j})$,
the lowest order term of $q_j(z(r))$ is $c_j r^{Ma_j}$
and hence $q_j(z(r)) r^{2Mj(1+K)}$ has lowest order
term $c_j r^{Ma_j+2Mj(1+K)}$.
Consider now
\[
a= \min_j \{Ma_j+2Mj(1+K)\} < 
2M^2(1+K)
\]
and let $J$ be the set of minimizing indices.
Assuming not all $q_j$ are zero, $J$ is nonempty.
Then, the lowest order term of $q(z(r),w(r,B))$
is
\[
r^a\sum_{j\in J} c_j(iB)^j.
\]
But since $|q(z(r),w(r,B))|$ vanishes to order $2M^2(1+K)$
for each $B$, this term must equal zero for each $B>0$.
Therefore, each $c_j$ must be zero contrary to
their construction.  
Thus, we must have that $J$ is empty and every $q_j$ is zero
which means that $q = 0$ mod $\mathcal{I}$.
\end{proof}

\section{Global examples}\label{examplesection}
In this section, we examine specific examples of globally stable polynomials that illustrate our findings. A similar analysis is carried out in the bidisk in \cite{BKPS} and in polydisks in \cite{BKPS2}.

While several methods have been devised to construct and modify multivariate polynomials that are stable in a polydisk, see for instance \cite{pascoe18,BPS2,BH,TDetal}, this aspect of the theory seems less developed in the settings of the unit ball or the Siegel half-plane.

\subsection{Elementary examples}
\begin{example}
We revisit the simplest polynomial that is stable in the $2$-ball, but has a boundary zero, namely $p(z_1,z_2)=1-z_2$. 

Its counterpart in the Siegel half-plane is 
\[P(z,w)=w.\]
The set $Z_p \cap U_2$ is locally (and in this case globally) parametrized by $t\mapsto (t,\phi(t))$ with $\phi\equiv 0$, so $\phi$ is trivial. As we worked out in the Introduction, for this example
\[\mathcal{I}_p^{\infty}=(w, z^2) \quad \diamondsuit\]
\end{example} 

\begin{example} \label{Nextsimplest}
The next simplest example for the ball is
\[p_c(z_1,z_2)=1-z_2+cz_1^2,\]
where $c\geq 0$ is a constant. That these polynomials are globally stable with respect to $\mathbb{B}_2$ precisely when $0\leq c\leq \frac{1}{2}$ can be seen by noting that, on $Z_{p_c}$, we have $z_2=1+cz_1^2$, and with $z_1=x_1+iy_1$, 
\[|z_1|^2+|z_2|^2=|z_1|^2+|1+cz_1^2|^2=(1+2c)x_1^2+(1-2c)y_1^2+2c^2x_1^2y_1^2+c^2(x_1^4+y_1^4)+1.\]
When $0\leq c\leq \frac{1}{2}$, all terms on the right are positive for $(x_1,y_1)\neq (0,0)$ and the sum attains a minimum of $1$ at $(0,0)$. Conversely, if $c>\frac{1}{2}$ the expression can be made smaller than $1$ by picking $x_1=0$ and $y_1$ sufficiently close to the origin.
Hence we have $Z_{p_c}\cap \left(\overline{\mathbb{B}_2}\setminus\{(0,1)\}\right)=\emptyset$ precisely when $0\leq c\leq \frac{1}{2}$.
These polynomials have previously featured in \cite[Section 2.8]{KZ}
in the context of 3 point interpolation for bounded analytic functions
on the ball.

Moving to $U_2$ using the map $F$ in \eqref{siegeltoball}, and rescaling, we obtain the polynomial
\begin{equation}
P_c(z,w)=w-iw^2-2ciz^2,  
\label{basicexfamily}
\end{equation}
which is stable in $U_2$ for $0<c\leq \frac{1}{2}$ by the computation above. Setting $P_c(z,w)=0$, completing the square, and identifying the correct branch, we find that $Z_{P_c}$ is locally parameterized near $(0,0)$ by $t\mapsto (t,\phi_{c}(t))$, where
\[\phi_c(t)=\frac{i}{2}\left((1+8ct^2)^{\frac{1}{2}}-1\right)=2cit^2-4c^2it^4+16c^3it^6+\cdots\]
In other words, we have $\psi_0 = 2ci$.
When $0<c<\frac{1}{2}$, we are in the basic case $M=1$, $a_0=2M=2$, and $0<\Im \psi_0<1$, meaning that
\[\mathcal{I}_{P_c}^{\infty}=(w,z^2).\]

When $c=\frac{1}{2}$, we get $\psi_0=i$, and $\phi_{1/2}(t) = it^2 -it^4+ o(t^4)$.
By Corollary \ref{simplecriterion}, $L(s) = s^2 + o(s^2)$.
Thus, $K=1$ and to calculate the admissible numerator
ideal we record the Taylor polynomial of $\phi_{1/2}$ of degree less than $4$, namely $iz^2$.
By Theorem \ref{admissiblenonbasic}, the ideal of admissible numerators is
\[\mathcal{I}_{P_\frac{1}{2}}^{\infty}=(w-iz^2, z^4). \qquad \diamondsuit\]
\end{example}

\begin{example} \label{Deg4}
A similar analysis allows us to determine parameter values for which the polynomials 
\[q_c(z_1,z_2)=1-z_2+cz_1^4, \quad c\geq 0,\]
are stable with respect to $\mathbb{B}_2$. On $Z_{q_c}$, we have $z_2=1+cz_1^4$ and a computation reveals that
\[|z_1|^2+|z_2|^2=1+x_1^2+y_1^2-12cx_1^2y_1^2+c^2(x_1^4+y_1^4-6x_1^2y_1^2)^2+2c(x_1^4+y_1^4).\]
A sufficient condition for having $|z_1|^2+|z_2|^2\geq 1$, 
with equality if and only if $(z_1,z_2)=(0,1)$ is $0\leq c\leq \frac{1}{12}$, 
as can be seen by examining when $(x_1,y_2)\mapsto x_1^2+y_1^2-12cx_1^2y_1^2$ 
is non-negative for $x_1^2+y_1^2\leq 1$. 
(With additional effort, one can show that $0\leq c\leq \frac{27}{32}$
is necessary and sufficient for stability, 
but in this full range additional zeros on $\partial\mathbb{B}_2$ appear.)

Moving to $U_2$, we arrive at the family
\[Q_c(z,w)=w-3iw^2-3w^3+iw^4+8ciz^4,\]
which furnishes stable polynomials for $U_2$, with a single zero at $(0,0)$, when $0\leq c\leq \frac{1}{12}$.

Note that $w-3iw^2-3w^3+iw^4$ has a simple root at $w=0$ (and a triple root at $w=-i$). Then $Z_{Q_c}$ is locally parameterized near $(0,0)$ by $t\mapsto (e^{i\pi/4}t,\phi_c(t))$, with
\[
\phi_c(t)=8cit^4+\cdots, 
\]
and we are again in the basic case $M=1$ but now in the instance where $a_0=4>2M$.
By Section \ref{basiccaseideal}, the ideal of admissible numerators is
\[I^{\infty}_{Q_c}=(w,z^2). \quad \diamondsuit\]
\end{example}

\subsection{Stable polynomials via one-variable polynomials}\label{1dlift}
Any polynomial $\mathfrak{p}\in \C[x]$ that is stable with respect to the unit disk $\D=\{x\in \C\colon |x|<1\}$ lifts to a polynomial that is stable in $\B_d$ by setting $p(z_1,\ldots, z_d)=\mathfrak{p}(z_d)$, and zeros $\{x_0,x_1,\ldots,x_n\}$ on $\mathbb{T}=\{x\in\C \colon |x|=1\}$ are transplanted to points 
$\{(0, \ldots,0, x_j)\}\subset \partial\mathbb{B}_d$. 

A slightly more interesting lifting (which has featured in several works, eg. \cite[Section 3]{Arv} and \cite[Section 4]{APRS}) arises from the map $\tau \colon \B_d\to \C$ given by
\[\tau(z_1,\ldots, z_d)=d^{d/2}\prod_{j=1}^dz_j.\]
Note that $|\tau(z)|\leq |z|^{d}$ since
\[(|z_1|^2\cdots |z_d|^2)^{\frac{1}{d}}\leq \frac{1}{d}|z|^2\]
by the arithmetic-geometric mean inequality, with equality precisely when all moduli are equal. 
Now if $\mathfrak{p}\in \mathbb{C}[x]$ is stable with respect to $\D$, 
then $p(z)=(\mathfrak{p}\circ \tau)(z) \in\mathbb{C}[z_1,\ldots, z_d]$ is stable in $\B_d$, 
and if $\mathfrak{p}(x_0)=0$ for some $x_0\in \mathbb{T}$ 
then $p$ vanishes along a curve $\tau^{-1}(\{x_0\})\subset \partial \mathbb{B}_d$ 
where all variables have equal modulus.
A prime example of this is the two-variable polynomial 
\[p(z_1,z_2)=1-2z_1z_2,\]
which arises from $\mathfrak{p}(x)=1-x$, and variants obtained by composition with unitaries (eg. \cite[pp. 251-253]{S}) such as $p=1-z_1^2-z_2^2$
(which also appears in the next section). 
A particularly attractive version of these polynomials in the unbounded realization of the ball is
\[P(z)=w-z^2.\]

We cannot get any genuinely different two-variable (or multivariable) examples using this one-variable lifting approach since any one-variable polynomial can be factored into linear factors of the form $a-z$, possibly with repetition, and then $p\circ \tau$ and its unbounded analogs will also factor into expressions of the above form.

\subsection{Examples from quadratic forms} \label{quad}

If $A$ is a symmetric contractive $d\times d$ matrix, then $P(z) = 1- z^tAz$
has no zeros in $\B_d$.  As in the proof of Lemma \ref{Autonne},
$A$ can be factored as $A = U D U^t$ where $D$ is a diagonal
matrix with entries $D_1,\dots, D_d \in [0,1]$ on the diagonal 
and $U$ is a unitary.
Then, up to composition with a unitary, $P$ is of the form 
\[
1- z^tDz = 1- \sum_{j=1}^{d} D_j z_j^2.
\]
The only way to have a boundary zero is if some $D_j =1$.  
If several $D_j=1$ then we get an entire
real subspace intersected with $\partial \B_d$
as boundary zeros.  
Assuming $D_d = 1$ and converting to $U_d$ we obtain
the polynomial 
\[
p(z_1,\dots, z_{d-1},w) = iw - \sum_{j=1}^{d-1} D_j z_j^2.
\]
The expression from Theorem \ref{smooththm}, $\frac{1}{2|\nabla p(0)|} Hp(0)$
is a diagonal matrix with $D_1,\dots, D_{d-1}$ on the diagonal.

The most extremal case $P(z) = 1 - z^tz$
is of some interest as it has boundary zero set $(\partial \B_d) \cap \R^d$.
It is worth pointing out that automorphisms of the ball
only produce unitary transformations of this polynomial.
Indeed, every automorphism of $\B_d$ can be written
as $U \circ \varphi_a$ where $U$ is a unitary transformation
and $\varphi_a$ is the involutive automorphism
\[
\varphi_a(z) = \frac{a - P_a z - s_a Q_a z}{1-\langle z, a\rangle}
\]
where we write as in Rudin \cite{Rud}; $a\in \B_d$, $P_a$ is
orthogonal projection onto the span of $a$, $Q_a = I-P_a$, and
$s_a = (1-|a|^2)^{1/2}$.
The only thing relevant here is the formula
\begin{equation} \label{Rudiniii}
1- \langle \varphi_a(z), \varphi_a(w)\rangle=
\frac{(1-|a|^2)(1-\langle z,w\rangle}{(1-\langle z,a\rangle)(1-\langle a,w\rangle)}
\end{equation}
which is Theorem 2.2.2(iii) of \cite{Rud} (page 26).

We can equally well write any automorphism of $\B_d$
as $\varphi_a \circ U$ for some unitary, so 
considering the (now rational) function
\[
P\circ \varphi_a \circ U
\]
up to unitary composition, we can ignore the unitary $U$.
We can compose with a diagonal unitary to
force $a \in \R^d \cap \B_d$ and in doing so $\varphi_a(z)$
is real for $z\in\R^d\cap \B_d$.  
So, for real $z=w$ equation \eqref{Rudiniii} becomes
\[
1- \varphi_a(z)^t \varphi_a(z)=
\frac{(1-|a|^2)(1-z^tz)}{(1-z^ta)^2}.
\]
But since both sides are analytic in $z$ and the identity holds for real
$z$, it must hold for all $z$.
This then shows $P\circ \varphi_a$ is $1-z^tz$ after clearing
denominator and constant factor.

\subsection{Rudin's construction} \label{Rudin}

In Rudin \cite[p.164]{Rud}, a simple construction is
given for producing many polynomials with no zeros on $\B_2$
but with a zero at $(0,1)$.
Given that the power series
\[
1- \sqrt{1-t} = \sum_{k=1}^{\infty} c_k t^k
\]
has $c_k>0$ for all $k\geq 1$, 
if we consider any analytic function of the form
\[
f(z_1,z_2) = z_2 + z_1^2 g_1(z_1,z_2) + z_1^4 g_2(z_1,z_2)+\cdots
\]
where each $g_k$ is analytic on $\B^2$ and satisfies $\sup_{\B_2}|g_k| \leq c_k$
then for $(z_1,z_2) \in \B_2$
\[
|f(z_1,z_2)| \leq |z_2| + 1-\sqrt{1-|z_1|^2} < 1.
\]
Then, $F(z_1,z_2) := 1-f(z_1,z_2)$ has no zeros in $\B_2$
and if $f$ extends analytically to $(0,1)$
we have $f(0,1) = 1$, so that $F(0,1) = 0$.
Noting $c_k = \frac{1}{2k} \frac{1}{4^{k-1}} \binom{2k-2}{k-1}$
and in particular
\[
c_1 = 1/2,\quad c_2 = 1/8, \quad  c_3 = 1/16, \quad c_4 = 5/128,\dots
\]
we can choose the $g_k(z_1,z_2)$ to be polynomials
(and zero for all but finitely many $k$) to get examples.
Here are some examples obtained by choosing the $g_k$ to be constant.
\[
1- \left(z_2 + \frac{1}{2}z_1^2\right), \quad 1- \left(z_2 + \frac{1}{2}z_1^2 + \frac{1}{8} z_1^4\right),\dots.
\]
It is possible to show that when we convert to $U_2$, 
we obtain curves with parametrizations
\[
t\mapsto (it,\phi(t)) = (it, it^2 - i2\beta t^{2K+2} + o(t^{2K+2}))
\]
for $\beta >0$ (the $2\beta$ is for convenience later).  The example $1-z_2-(1/2)z_2^2$ corresponds to $K=1$
and higher order truncations lead to larger $K$.

By Corollary \ref{simplecriterion}, $L(s) = \beta (K+1) s^{2K} + o(s^{2K})$.
Thus, this exhibits a global example with a given value of $K$
as in Theorem \ref{mainthm}.

\subsection{Stable polynomials via row contractions}\label{sec:row}
A natural method for constructing examples of stable polynomials in $\mathbb{B}_d$ mirroring a standard approach in $\mathbb{D}^d$ (see eg. \cite[Section 2.2]{Betal} or \cite{AM05}) goes as follows. Let $A=(A_1,\dots,A_d)$, $A_j\in M_{N,N}(\mathbb{C})$, be a $d$-tuple of matrices giving rise to a row contraction, that is, 
\[
\sum_{j=1}^{d} A_jA_j^* \leq I_N
\]
where $I_N$ denotes the identity matrix, and $A\leq B$ is used to indicate that $B-A$ is a positive semi-definite matrix. 

Set
\[p(z_1,\dots, z_d)=\det\left(I-\sum_{j=1}^{d}z_jA_j\right), \quad z\in \mathbb{B}_d;\]
then $p$ is stable in the ball. 
This follows from the observation
that for $z\in \mathbb{B}_d$ and $v\in \C^N$
\[
\left\|\sum_{j=1}^{d} \bar{z}_j A_j^*v\right\| \leq \sum_{j=1}^{d}|z_j|\|A_j^*v\|
\leq \|z\| \left\langle \sum_{j=1}^{d}  A_jA_j^*v, v \right\rangle^{1/2} 
<\|v\|
\]
by Cauchy-Schwarz.

One might now hope that judicious choices of $A_j$'s would produce examples of stable polynomials with interesting boundary behavior. For instance, one can produce
all $\D^2$-stable polynomials via a block $(N_1+N_2)\times (N_1+N_2)$ contractive matrix $K$ via
the formula
\[
c \det\left(I - K \begin{pmatrix} z_1I_{N_1} & 0 \\
0 & z_2 I_{N_2} \end{pmatrix}\right).
\]
See \cite{Kummert}, \cite{Grinshpan}.

However, the boundary zeros of the stable polynomials that we get via row contractions can
be factored out as degree one polynomials.
This was pointed out to us in personal communication by Kelly Bickel, who also supplied the essence of the following proof. 
\begin{lemma}\label{contrmatdivision}
Suppose $A=(A_1,\dots,A_d)$ is an $N\times N$ row contraction and $A_1^*e_1 = e_1$
for $e_1 = (1,0,\dots, 0)^t$.
Then, $A_j^* e_1 =0$ for $j=2,\dots, d$ and writing
\[
A_1 = \begin{pmatrix} 1 & \vec{0}^t \\
* & \tilde{A}_1 \end{pmatrix},\quad  A_j = \begin{pmatrix} 0 & \vec{0}^t \\
* & \tilde{A}_j \end{pmatrix} \quad j=2,\dots, d
\]
we have
\[
\det\left(I - \sum_{j=1}^{d} z_j A_j\right) = (1-z_1) \det\left(I-\sum_{j=1}^{d} z_j \tilde{A}_j\right).
\]
Note in particular that $(\tilde{A}_1,\dots, \tilde{A}_d)$ is an $(N-1)\times (N-1)$ row contraction.
\end{lemma}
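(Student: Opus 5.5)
The plan is to use the row contraction inequality $\sum_j A_jA_j^* \le I_N$ applied to the vector $e_1$. First I will show that $A_j^*e_1=0$ for $j\ge 2$. We have
\[
1 = \|e_1\|^2 \ge \left\langle \sum_{j=1}^d A_jA_j^* e_1, e_1\right\rangle = \sum_{j=1}^d \|A_j^*e_1\|^2 = 1 + \sum_{j=2}^d \|A_j^* e_1\|^2,
\]
using $A_1^*e_1=e_1$. This forces $A_j^*e_1 = 0$ for $j\ge 2$.

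Next I translate these conditions into the block structure. The condition $A_1^*e_1=e_1$ says that the first column of $A_1^*$ is $e_1$. Equivalently, the first row of $A_1$ is $(1,\vec 0^t)$. Similarly, $A_j^*e_1=0$ says that the first row of $A_j$ is zero. This gives exactly the displayed block forms, with lower-right blocks $\tilde A_j$ and arbitrary first columns below the diagonal.

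Then $I-\sum_j z_jA_j$ is block lower triangular, with first row $(1-z_1,\vec 0^t)$ and lower-right block $I_{N-1}-\sum_j z_j\tilde A_j$. Expanding the determinant along the first row gives the claimed factorization $(1-z_1)\det(I-\sum_j z_j\tilde A_j)$.

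Finally, I check that $(\tilde A_1,\dots,\tilde A_d)$ is again a row contraction. Take $v\in\C^{N-1}$ and set $\hat v=(0,v)$. Because of the block form, $\tilde A_j^* v$ is the compression of $A_j^*\hat v$ to the last $N-1$ coordinates. Hence
\[
\sum_j\|\tilde A_j^*v\|^2 \le \sum_j \|A_j^*\hat v\|^2 \le \|\hat v\|^2=\|v\|^2.
\]
Equivalently, the upper-left block of $\sum_j A_jA_j^*$ dominates $\sum_j \tilde A_j\tilde A_j^*$ after discarding the nonnegative contribution of the $*$ columns. No step is a real obstacle. The only point needing care is the norm equality trick in the first step, and the remaining steps are bookkeeping.
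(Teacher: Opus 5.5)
Your proof is correct and follows the paper's argument. You apply $\sum_j A_jA_j^*\le I_N$ to $e_1$ to force $A_j^*e_1=0$ for $j\ge 2$, read off the first rows, and expand the determinant along the first row. You also check that $(\tilde A_1,\dots,\tilde A_d)$ is a row contraction, which the paper only asserts; your $\hat v=(0,v)$ computation is right, though in your closing remark it is the lower-right block of $\sum_j A_jA_j^*$, not the upper-left one, that dominates $\sum_j \tilde A_j\tilde A_j^*$.
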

\begin{proof}
Since the tuple $(A_1,\dots,A_d)$ is a row contraction,
and $e_1$ is an eigenvector of $A_1^*$, we obtain that
\[
\|e_1\|^2\geq \langle A_1A_1^*e_1, e_1\rangle+\sum_{j=2}^{d} \langle A_jA_j^*e_1,e_1 \rangle =\|e_1\|^2+\sum_{j=2}^{d}\|A_j^*e_1\|^2.
\]
Hence $A_j^*e_1 = 0$ for $j=2,\dots, d$. 
The determinant formula follows evaluating along the first row.
\end{proof}
As a consequence of Lemma \ref{contrmatdivision}, we obtain that stable polynomials obtained from row contractions factor.

\begin{prop}\label{factordet}
Let $p=\det(I-\sum_{j=1}^{d}z_jA_j)$ where $A=(A_1,\dots, A_d)$ is a row contraction 
with $A_j \in M_N(\mathbb{C})$. 
Then, $p$ has finitely many zeros on $\partial\mathbb{B}_d$,
and we can factor

\[
p(z)=q(z)\prod_{\zeta \in Z_p\cap \partial\mathbb{B}_d}(1-\langle z, \zeta\rangle).
\]
where $q\in \mathbb{C}[z_1,z_2]$ satisfies $Z_q\cap \overline{\mathbb{B}^2}=\emptyset$ 
and the product is over $Z_p\cap \partial \mathbb{B}_d$ (with repetitions to account for multiplicity).
\end{prop}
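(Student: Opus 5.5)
The argument is an induction on $N$ that peels off one boundary zero at a time using Lemma \ref{contrmatdivision}. Suppose $\zeta \in \partial\B_d$ with $p(\zeta)=0$, and set $T=\sum_j \zeta_j A_j$. Then $I-T$ is singular, so $T$ has eigenvalue $1$. Since $\|T^*\| = \|\sum_j \bar\zeta_j A_j^*\| \le |\zeta|=1$ by the Cauchy--Schwarz estimate used to prove stability, $T$ is a contraction. For a contraction, eigenvalue $1$ of $T$ gives eigenvalue $1$ of $T^*$ with the same eigenvector. Concretely, if $T^*v=v$ with $\|v\|=1$, equality in the Cauchy--Schwarz chain
\[
\|v\| = \Big\|\sum_j \bar\zeta_j A_j^* v\Big\| \le \sum_j |\zeta_j|\,\|A_j^*v\| \le |\zeta|\Big(\sum_j\|A_j^*v\|^2\Big)^{1/2}\le \|v\|
\]
forces $A_j^* v = \zeta_j v$ for each $j$. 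Here the proportionality condition in Cauchy--Schwarz gives $\|A_j^*v\| = |\zeta_j|$ in the right direction, and the triangle-inequality equality aligns the phases.

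Next I change coordinates. Conjugate the tuple by a unitary $V$ on $\C^N$ with $Ve_1 = v$; this preserves both the row contraction property and $p$. Compose the variables with a unitary $W$ of $\C^d$ taking $\zeta$ to $\vec e_1$, replacing the tuple by $B_k=\sum_j W_{jk} A_j$, which is again a row contraction because $\sum_k B_kB_k^* = \sum_j A_jA_j^*$ for unitary $W$. In these coordinates $B_1^* e_1 = e_1$ and $B_k^* e_1 = 0$. Lemma \ref{contrmatdivision} then gives $p = (1-z_1')\tilde p$, where $\tilde p$ comes from an $(N-1)\times(N-1)$ row contraction. Undoing the coordinate change, $1-z_1'$ becomes $1-\langle z,\zeta\rangle$, with the convention for $\langle\cdot,\cdot\rangle$ matching $\sum z_j\bar\zeta_j$. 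Induction on $N$ then strips off one linear factor per boundary zero, and the process must stop after at most $N$ steps since each step lowers the degree of $p$.

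Once no boundary zeros remain, the leftover $q$ is a determinant from a row contraction, so it has no zeros in $\B_d$ and none on $\partial\B_d$; hence $Z_q\cap\overline{\B_d}=\emptyset$. For finiteness and the multiplicity bookkeeping, I use that the zeros of $1-\langle z,\zeta\rangle$ on $\overline{\B_d}$ consist only of $\zeta$, which follows from the equality case of Cauchy--Schwarz. Thus $Z_p\cap\partial\B_d$ is exactly the finite list of the $\zeta$'s extracted, at most $N$ of them. A $\zeta$ appears repeatedly precisely when $\tilde p$ still vanishes there, and this matches the multiplicity convention in the statement.

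The main obstacle is the eigenvector step: showing that a boundary zero yields a common eigenvector $v$ of all $A_j^*$ with $A_j^*v=\zeta_j v$. This requires getting the conjugations right ($\zeta_j$ versus $\bar\zeta_j$) so that the equality case produces exactly the hypothesis of Lemma \ref{contrmatdivision} after the unitary change of variables.
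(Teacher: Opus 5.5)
Your proof is correct and follows the paper's argument: a unitary change of variables sending $\zeta$ to $\vec e_1$, a unitary conjugation that makes a fixed vector of $B_1^*$ equal to $e_1$, then Lemma \ref{contrmatdivision} and induction on $N$. The only difference is that you derive $A_j^*v=\zeta_j v$ by hand from the equality case of Cauchy--Schwarz. The paper instead takes any $v$ with $B_1^*v=v$ (which exists because $\det(I-B_1)=0$) and lets the lemma produce $B_j^*e_1=0$ for $j\ge 2$, so the step you flag as the main obstacle is already handled by the lemma.
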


\begin{proof}
We will show that any zero $\zeta$ on $\partial \mathbb{B}_d$
leads to a factor $1-\langle z ,\zeta\rangle$.
The number of such factors (and the number
of such zeros taken with multiplicity)
can therefore be at most the total degree of $p$.

Suppose $a = (a_1,\dots, a_d) \in \partial \mathbb{B}_d$
and $p(a) = 0$.
Let $U$ be a $d\times d$ unitary matrix with $a^t$ as its first column.
We shall view $U\otimes I_N$ as a block $(dN\times dN)$ (unitary) matrix
with first block column
\[
\begin{pmatrix} a_1 I_N \\ \vdots \\a_d I_N \end{pmatrix}.
\]
Consider the row contraction
\[
(A_1,\dots, A_d) (U\otimes I_N) =: (B_1,\dots, B_d) = B
\]
where $B_1 = \sum_{j=1}^{d} a_j A_j$.
Since $0=p(a) = \det(I-B_1)$, we can conjugate
by a unitary and assume
$B_1^* e_1 = e_1$.
Applying Lemma \ref{contrmatdivision} to $B$,
we have
\[
\det \left(I - \sum_{j=1}^{d} z_j B_j\right) = (1-z_1)\det \left(I-\sum_{j=1}^{d} z_j \tilde{B}_j\right)
\]
with notation from the statement of the lemma.
Writing $B(z\otimes I_N)$ for $\sum_{j=1}^{d} z_j B_j$,
we have $B(U^*z \otimes I_N) = A(z\otimes I_N) = \sum_{j=1}^{d} z_j A_j$.
Note that the first component of $U^*z$ is $\langle z,a\rangle$.
Then, substituting $U^*z$ for $z$ we have
\[
\det(I - \sum_{j=1}^{d}z_j A_j) = (1-\langle z,a\rangle)\det(I - \tilde{B}(U^*z\otimes I_{N-1})).
\]
In the last expression $\tilde{B}(U^*z \otimes I_{N-1}) = \tilde{B}(U^*\otimes I_{N-1}) (z\otimes I_{N-1})$. Setting $\tilde{A} = \tilde{B}(U^*\otimes I_{N-1})$, we have
factored
\[
p(z) = \det\left(I - \sum_{j=1}^{d}z_j A_j\right) = (1-\langle z,a\rangle)\det\left(I - \sum_{j=1}^{d}z_j \tilde{A}_j\right).
\]
With a standard inductive argument we obtain the theorem where
$q(z)$ is a determinantal polynomial $\det(I - \sum_{j=1}^{d}z_j C_j)$
where $C = (C_1,\dots, C_d)$ is a row contraction and $q$ has no
zeros on $\partial \mathbb{B}_d$.

\end{proof}

\begin{remark} \label{remark:proper}
It is also worth pointing out another place where stable
polynomials arise---and in this situation we again have
the property that the relevant stable polynomials
have no boundary zeros.  The setting is 
\emph{rational proper maps}; that is rational
functions $q/p: \mathbb{B}_n \to \mathbb{B}_N$
which are proper maps from one ball $\mathbb{B}_n$
to another $\mathbb{B}_N$.  
Necessarily, the valid denominators $p$ must
be non-vanishing in $\mathbb{B}_n$.
Properness translates to the property that
\[
|p(z)|^2 = \|q(z)\|^2
\]
for $|z| = 1$; here we emphasize that $q$ is a vector
polynomial.
Assuming $q, p$ have no common factors, \cite{CimaSuffridge}
proves $p$ has no zeros on the boundary.
See also Theorem 6.8 of \cite{dA2} (page 207).   $\spadesuit$
\end{remark}

\subsection{Examples beyond the ball setting}
The structure of an ideal of admissible numerators depend heavily on both the choice of domains as well as the given denominator polynomial. We point the way towards future research by examining what happens when $p=1-z_2$ is kept fixed but the underlying domain varies. Presumably, a systematic approach might involve analyzing analogs of a $G_{\phi}$ function adapted to the domain in question.
\begin{example}
For $\alpha >0$, consider the complex ellipsoid
\[\Omega=\mathcal{E}_{\alpha}=\{(z_1,z_2)\in \mathbb{C}^2\colon |z_1|^2+|z_2|^{2\alpha}<1\}.\]
The $\mathcal{E}_{\alpha}$ are examples of pseudoconvex domains in $\mathbb{C}^2$ that fail to be strongly pseudoconvex at $(1,0)$ when $\alpha>1$ (see eg. \cite{dA}). 

We again take $p(z)=1-z_2$. The only zero of $p$ in $\mathcal{E}_{\alpha}$ occurs at the special boundary point $(1,0)$. Clearly, $1-z_2\in \mathcal{I}^{\infty}_p$. 
Now consider $f_N=z_1^N(1-z_2)^{-1}$, $N\in \mathbb{N}$. Taking $0<a<1$, and evaluating $f_N$ along the curve \[\gamma =\{(a(1-r^2)^{1/2\alpha},r)\colon r\in [0,1)\}\subset \mathcal{E}_{\alpha},\] 
we obtain
\[|f_N(a(1-r^2)^{1/2\alpha},r)|^2=a^{2\alpha}\frac{(1-r^2)^{\frac{N}{\alpha}}}{(1-r)^2}=a^{2\alpha}(1+r)^{\frac{N}{\alpha}}(1-r)^{\frac{N}{\alpha}-2},\]
and thus $f_N$ remains bounded near $(0,1)$ if and only if $N\geq 2\alpha$. The fact that $|z_1|^{2\alpha}<1-|z_2|^2$ in $\mathcal{E}_{\alpha}$ can then be used to show that $z_1^{\lceil 2\alpha\rceil+k}/(1-z_2)$ is a bounded rational function in $\mathcal{E}_{\alpha}$ for each integer $k\geq 0$. Thus $\mathcal{I}^{\infty}_{p}=(z_1^{\lceil 2\alpha \rceil}, 1-z_2)$. (We only need the ceiling function if we want to consider globally defined rational functions.)

For instance, when $\alpha=2$, the monomials $z_1,z_1^2, z_1^3\notin \mathcal{I}^{\infty}_p$, and $\mathcal{I}^{\infty}_p=(z_1^4, 1-z_2)$. On the other hand, when $\alpha=1/2$, we find that $(z_1, 1-z_2)=\mathcal{I}^{\infty}_p$, meaning that 
$q(0,1)=0$ becomes both necessary and sufficient for $q/p$ to be a bounded rational function in $\mathcal{E}_{1/2}$. $\diamondsuit$
\end{example}

\section{Constructing polynomials with local behavior} \label{sec:global}

How do we construct polynomials with
prescribed local boundary behavior?
How do we build irreducible polynomials 
$p(z,w) \in \C[z,w]$
with no zeros on $U_2$ but with a real one dimensional curve
of zeros on the boundary?

As discussed in the introduction, we prove the following
theorem and construct several related examples.
Recall that we are interested in parametrizations
of the form
\[
s \mapsto \gamma(s) = \left(cs^Me^{M L(s)}, is^{2M} +i2M \int_{[0,s]} x^{2M} L'(x)dx\right)
\] 
where $L(s) \in \C\{s\}$, $L(0)=0$, $|c|=1$, $M \in \mathbb{N}$ with additional details 
laid out in Theorem \ref{mainthm} depending
on whether we are in isolated type or curve type cases.

\algthm*

Here are some instructive examples.

\begin{example} \label{revNext}
We can reverse engineer the isolated type polynomial in Example \ref{Nextsimplest} 
\[
P_{1/2}(z,w) = w-iw^2-iz^2
\]
which had parametrization 
\[
t \mapsto (t, \phi(t))
\]
where $\phi(t) = (i/2)((1+4t^2)^{1/2}-1)$.
Following Remark \ref{Remark:buildL}, 
we factor $(-i/2)t\phi'(t) = \Phi(t)^2$
and get 
\[
\Phi(t) = \frac{t}{(1+4t^2)^{1/4}}
\]
and $\Psi(s)=  \Phi^{-1}(s)$ 
\[
\Psi(s) = s ((1+4s^4)^{1/2}+2s^2)^{1/2}
\]
so that finally
\[
e^{L(s)} = ((1+4s^4)^{1/2}+2s^2)^{1/2}.
\]
As one can check, this algebraic function has no zeros or
poles in $\C_{*}$. 
We have
\[
L'(s) = \frac{2s}{(1+4s^4)^{1/2}} \text{ and } L'(1/s) = \frac{s}{(1+s^4/4)^{1/2}}
\]
and the last expression evidently has no $s^3$ term in its power series.
$\diamondsuit$
\end{example}

\begin{example} \label{ex:algsimple}
(Example with good local behavior but not global)
    Consider $f(s) = \frac{1}{2}(1+(1+is)^{1/2})$ as a candidate for $e^{L(s)}$.
    One of the branches has $f(0)=1$ while the other branch vanishes
    at $0$. This is the only zero for any analytic continuation.
    The logarithmic derivative is
    \[
    f'(s)/f(s) = \frac{i}{2} \frac{1}{(1+is)^{1/2}(1+(1+is)^{1/2})}
    =
    \frac{1}{2s}\left(1-\frac{1}{(1+is)^{1/2}}\right).
    \]
One branch near $0$ begins $L'(s) = \frac{i}{4} + \frac{3}{16}s + O(s^2)$
    which means $L(s) = (i/4)s + (3/32)s^2 + O(s^3)$
    which is of the form required for isolated type
    in Theorem \ref{mainthm}.  The other branch of $L'(s)$ has
    a simple pole at $0$.
    The Puiseux-Laurent series of $L'(1/s) = 2s(1- s^{1/2}(s+i)^{-1/2})$ at
    $0$ has no $s^3$ term as required.
    
    The parametrization $\gamma(s)$ can
    be explicitly computed (and one could 
    find an explicit polynomial that vanishes
    on the range of $\gamma$).

    Indeed, 
    \[
    \gamma(s) = \left( \frac{s}{2}(1+(1+is)^{1/2}), 
    \frac{3is^2}{2} - 2i \left[ \frac{(1+is)^{3/2}-1}{3} - ((1+
    is)^{1/2}-1)\right]\right).
    \]
   
    The curve does enter $U_2$ though.  
    For instance, for $s= -4-2i$
    \[
    \gamma(s) = (3-i, -56/3 + 14i)
    \]
    using $(1+is)^{1/2} = -2+i$
    and $|3-i|^2=10 < \Im(-56/3 + 14i) = 14$
    shows $\gamma(s) \in U_2$. $\diamondsuit$
\end{example}

\begin{example}
(A non-example---logarithmic singularities in the plane)
Consider $L(s) = iA(s) = \log\left(\frac{1+is}{1-is}\right)$
which has imaginary coefficients because $\left|\frac{1+ir}{1-ir}\right| = 1$ for $r$ real. 
We can compute the integral in our parametrization  
\[
\int_{0}^{s} x^2 A'(x)dx =\int_{0}^{s} \frac{2x^2}{1+x^2} dx
=
2s - 2\tan^{-1}(s).
\]
Note that the zero and pole of $e^{L(s)}$ causes 
logarithmic singularities in the above integral at $\pm i$.
We obtain the parametrization
\[
s \mapsto \left( s \frac{1+is}{1-is} , is^2 +4\tan^{-1}(s) - 4s\right)
\]
which is \emph{not} algebraic. $\diamondsuit$
\end{example}

\begin{example} \label{ex:resatinf}
(Another non-example---non-trivial residue/period at infinity)
Consider
\[
f(s) = (1-s^2)^{1/2} + is
\]
as a candidate for $e^{L(s)}$
as it has no zeroes/poles in $\C$.
The logarithmic derivative is
\[
L'(s) = i(1-s^2)^{-1/2}
\]
and the power series $L'(1/s) = is(1-s^2)^{-1/2}$ \emph{does}
have a $s^3$ term.
Indeed, we see
\[
\int \frac{s^2}{(1-s^2)^{1/2}} ds
\]
is not algebraic as it is essentially
an inverse trig function $\diamondsuit$    
\end{example}

\begin{example}
(Non-example due to non-local periods) 
Consider $f(s) = \frac{1}{2}(1+(1+is^3)^{1/2})$
    as a candidate for $e^{L(s)}$.
    Evidently, it only has a zero at $0$ for one of the
    branches.  The logarithmic derivative
    \[
    L'(s) = \frac{3}{2s}\left(1- \frac{1}{(1+is^3)^{1/2}}\right)
    \]
    and $L'(1/s) = \frac{3s}{2}\left(1 - \frac{s^{3/2}}{(s^3+i)^{1/2}}\right)$
    which has no $s^3$ term.  
    In addition, while one branch of $L'$ has a pole at $0$
    the other leads to analytic $L(s) = \frac{1}{4} is^3 + \frac{3}{32} s^6 + o(s^6)$
    which satisfies the isolated type conditions of Theorem \ref{mainthm}.
Thus, $f$ satisfies all of the local constraints.
However, the relevant integral $\int s^2 L'(s)ds$ upon removing 
algebraic terms contains the term
\[
\int \frac{s}{(1+is^3)^{1/2}} ds
\]
which has a non-zero period when
integrating around two branch points.  $\diamondsuit$
\end{example}

\begin{example} \label{ex:exotic}
(No zeros in $U_2$ but with a curve of zeros on
the boundary.)
Consider $f(s) = ((1-4s^4)^{1/2} + 2is^2)^{1/2}$
which is a modification of Examples \ref{revNext}, \ref{ex:resatinf}.
Notice that $f(s)$ is unimodular for $s$ real 
(for any choices of branch)
which leads to curve type local behavior.
The logarithmic derivative is
\[
\frac{2is}{(1-4s^4)^{1/2}}
\]
and the integral 
\[
\int s^{2} \frac{2is}{(1-4s^4)^{1/2}} ds
\]
is evidently algebraic.  
The resulting parametrization
\[
s \mapsto \left(sf(s) , is^2 + \frac{1}{2}((1-4s^2)^{1/2} - 1)\right) = \left(sf(s), \frac{1}{2}(f(s)^2-1)\right)
\]
gives the zero set
of the polynomial $p(z,w) = w^2+w - iz^2$,
a simple modification of Example \ref{revNext}.
There is no harm in rotating $z$ to obtain
the simpler polynomial $w^2+w-z^2$.
This polynomial has no zeros in $U_2$.  
Indeed, the zero set is better parametrized 
via $w\mapsto (\sqrt{(w^2+w)}, w)$
and we need to show 
\[
|w^2+w| \geq \Im w.
\]
In fact, the stronger inequality $|w^2+w|^2 \geq (\Im w)^2$
holds because it can be rearranged into
\[
(|w|^2 + \Re w)^2 \geq 0.
\]
Equality in this inequality holds on the circle $|w+1/2| = 1/2$
but in our original inequality we need $\Im w \geq 0$.
So, equality holds in $|w^2+w| \geq \Im w$
for $|w+1/2|=1/2$ and $\Im w \geq 0$.  
This is how the parametrization from the introduction
and Figure \ref{fig:Bernoulli} is obtained. $\diamondsuit$
\end{example}

\begin{example}
How might we construct examples of polynomials
with local behavior in Theorem \ref{mainthm}
with $M=2$? 
The simplest (non-global) choice of $L(s)$ with $M=2$
from Example \ref{M2ex} was $\tilde{L}(s) = is^2+s^4+s^5$.
Recall the $s^5$ term is present to force
the parametrization to be irreducible, i.e. not
a function of $s^2$.
Since $e^{\tilde{L}} + e^{-\tilde{L}} = 2 \cosh(\tilde{L})$,
we can approximate the right-hand side with a
Taylor polynomial and analyze the algebraic
function satisfying
\[
f(s) + f(s)^{-1} = 2 P(s)
\]
where $P(s) = 1 - \frac{1}{2}s^4 +is^6+is^7
= \cosh(\tilde{L}(s)) + O(s^8)$.
Thus, $f(s)$ satisfies $y^2 - 2P(s) y + 1 =0$
which means
\[
f_{\pm}(s) = P(s) \pm \sqrt{P(s)^2-1}.
\]
Analytic continuations of $f(s)$ do not vanish in
$\C$ (as $f(s) = 0$ implies $P^2=P^2-1$).

Now, note that
\[
\sqrt{(\cosh(\tilde{L})+O(s^8))^2 - 1} = \sqrt{\sinh^2(\tilde{L}) + O(s^8)} = \sinh(\tilde{L})(1+O(s^6)) = \sinh(\tilde{L}) + O(s^8)
\]
so that
\[
f = P \pm \sqrt{P^2-1} = \cosh(\tilde{L}) \pm \sinh(\tilde{L}) + O(s^8) = e^{\pm \tilde{L}} + O(s^8)
\]
and that
\[
f'/f = \pm \tilde{L}' + O(s^7).
\]
Thus, taking the local log of $f_{+}$, $L = \log f_{+}$,
we have $L = \tilde{L} + O(s^8)$
which has the correct local expansion in order to
yield a local parametrization $\gamma$ that avoids $U_2$.
However, the other branch begins with $-\tilde{L}$
and therefore does not avoid $U_2$. $\diamondsuit$
\end{example}

We conclude the paper with the proof of Theorem \ref{algthm}.

\begin{proof}[Proof of Theorem \ref{algthm}]
To start
\[
0 \equiv p(\gamma(s))
\]
and taking the derivative yields
\begin{align*}
0&=(cMs^{M-1}e^{ML} + cs^Me^{ML}ML') p_1(\gamma(s)) + (2Mis^{2M-1}+2Mis^{2M}L')p_2(\gamma(s)) \\
&= Ms^{M-1}(1+sL')(ce^{ML} p_1 + 2is^{M} p_2)
\end{align*}
where $p_1,p_2$ denote partial derivatives.
This implies
\[
c s^{M} e^{ML} p_1 + 2is^{2M} p_2 = 0
\]
on $\gamma$.  
Consider then the polynomial in $z,w,t$
\[
Q(z,w,s) = z p_1(z,w) +2it p_2(z,w)
\]
which satisfies $Q(\gamma(s), s^{2M}) \equiv 0$.

We can assume without loss of generality that $c=1$.
Let $R(z,t)$ denote the resultant
with respect to $w$ of $p(z,w)$ and 
$Q(z,w,t)  \in \C[z,w,t]$.
Then, there exist polynomials 
$B(z,w,t),C(z,w,t)$ such that
\[
R(z,t) = B(z,w,t) p(z,w) + C(z,w,t)Q(z,w,t).
\]
The degree of $B(z,w,t)$ in $w$ is less than
the degree of $Q(z,w,t)$ in $w$
and similarly
the degree of $C(z,w,t)$ in $w$ is less than the degree of 
$p(z,w)$ in $w$. See for instance \cite[Proposition 4.19]{Basu}.

If $R(z,t)$ is identically zero, 
then we get a contradiction, as follows.
We would have
\[
-B(z,w,t) p(z,w) = C(z,w,t)Q(z,w,t)
\]
and since $p$ is irreducible and the degree 
of $C$ in $w$ is less than that of $p$,
we have that $p(z,w)$ is a multiple of $Q(z,w,t)$.
As $p$ does not involve $t$ we must have $p_2 = 0$ but then
$p$ cannot be a multiple of $p_1$.  
So, $R(z,t)$ is not identically zero.

Since $0 \equiv p(\gamma(s)) \equiv Q(\gamma(s),s^{2M})$, we have
$R(s^M e^{M L(s)}, s^{2M}) \equiv 0$ 
for $s$ near $0$.  Writing
\[
R(z,t) = \sum_{j,k=0}^{D} R_{j,k} z^j t^k
\]
we have
\[
0 = \sum_{j,k=0}^{D} R_{j,k} s^{Mj+2Mk} e^{MjL(s)}
\]
We see that $f(s) := e^{L(s)}$ satisfies
a nontrivial polynomial in $\C(s)[y]$,
the polynomials with coefficients in the field 
$\C(s)$ of rational functions.
In other words, $f(s) = e^{L(s)}$ is algebraic
and hence has analytic continuation through
the Riemann sphere minus finitely many points.
Necessarily, $f'(s)$ and $f'(s)/f(s)$ are
both algebraic and belong to $\C(s,f(s))$.  
In particular, $f'(s)/f(s)$ has singularities that are at worst simple poles 
(at zeros, poles, or algebraic zeros/poles of $f(s)$)
or branch points.

Next, consider
\[
g(s) := is^{2M} +i2M \int_{\Gamma_s} x^{2M} f'(x)/f(x)dx
=
2Mi\int_{\Gamma_s} x^{2M-1}(1+xf'(x)/f(x))dx.
\]
taken along some path $\Gamma_s$ from $0$ to $s$.
Since 
\[
p\left(s^M f(s)^M, g(s)\right) =0,
\]
$g(s)$ satisfies a polynomial with coefficients 
in $\C(s,f(s))$.
We claim that $g \in \C(s,f(s))$.
If not, $g$ satisfies a
(monic) minimal
polynomial $Q(y)$ with coefficients in 
$\C(s,f(s))$ and degree greater than $1$.
We can write
\[
Q(y) = Q(s;y) = \sum_{j=0}^{K} Q_j(s) y^j
\]
($Q_K=1$)
and
differentiating
\begin{equation}\label{diffeq}
0 = s\frac{d}{ds} Q(s;g(s)) 
= \sum_{j=0}^{K} \left( sQ_j'(s)g^j
+ Q_j(s)j g^{j-1} 2Mis^{2M}(1+sf'(s)/f(s))
\right).
\end{equation}
Since $Q_j(s) \in \C(s, f(s))$, 
we have $Q_j'(s) \in \C(s, f(s))$.
Since $Q_K'(s) = 0$, \eqref{diffeq} yields a polynomial
in $y$ with degree lower than $Q$,
contradicting minimality.  
Therefore, $g \in \C(s,f(s))$.
Notice that if the analytic continuation of $f(s)$
had a zero or pole at a point $s_0\in \C$ other than $0$,
then $g(s)$ would have a logarithmic 
singularity at $s_0\in \C$ contrary
to the fact that $g(s) \in \C(s,f(s))$.
Thus, we conclude that the analytic continuation
of $f(s)$ has no zeros in $\C_*= \C\setminus\{0\}$.

To analyze the integral $\int x^{2M} f'(x)/f(x)dx$
at infinity, we examine the Puiseux-Laurent series
of $f(1/s) = \sum_{j=m}^{\infty} a_j s^{j/k}$; here $a_m \ne 0$.
Using the change of variables $x= u^{-k}$ we have
\[
\int u^{-2kM} \frac{f'(1/u^k)}{f(1/u^k)}(-k)u^{-k-1}du.
\]
This will have a logarithmic singularity if
\[
u^{-2k(M+1)} \frac{f'(1/u^k)}{f(1/u^k)}u^{-k-1}
\]
has nonzero residue at $u=0$
or in other words if $f'(1/u^k)/f(1/u^k)$ contains
the term $u^{k(2M+1)}$.
This is equivalent to the Puiseux-Laurent series
of $f'(1/s)/f(1/s)$ containing the term $s^{2M+1}$
as claimed.

\end{proof}

\subsection*{Acknowledgements}

We thank Kelly Bickel for numerous conversations, comments, and suggestions in the early stages of this work.

\end{document}